\setlist[enumerate]{label={(\arabic*)}}
\newcommand{\red}[1]{\textcolor{red}{#1}}
\newtheorem{theorem}{Theorem}[section]
\newtheorem{lemma}[theorem]{Lemma}
\newtheorem{corollary}[theorem]{Corollary}
\newtheorem{fact}[theorem]{Fact}
\theoremstyle{definition}
\newtheorem{conjecture}[theorem]{Conjecture}
\newtheorem{problem}{Open Problem}
\newcommand{\squishlist}{
 \begin{list}{$\bullet$}
  { \setlength{\itemsep}{0pt}
     \setlength{\parsep}{3pt}
     \setlength{\topsep}{3pt}
     \setlength{\partopsep}{0pt}
     \setlength{\leftmargin}{2.5em}
     \setlength{\labelwidth}{1em}
     \setlength{\labelsep}{0.5em} } }
\newcommand{\squishlisttwo}{
 \begin{list}{$\triangleright$}
  { \setlength{\itemsep}{0pt}
     \setlength{\parsep}{0pt}
    \setlength{\topsep}{0pt}
    \setlength{\partopsep}{0pt}
    \setlength{\leftmargin}{2em}
    \setlength{\labelwidth}{1.5em}
    \setlength{\labelsep}{0.5em} } }
\newcommand{\squishend}{
  \end{list}  }
\begin{document}

\title{Dichotomizing $k$-vertex-critical $H$-free graphs for $H$ of order four}
\author{Ben Cameron\\
\and
Ch\'{i}nh T. Ho\`{a}ng\\
\and
Joe Sawada\\
}

\date{\today}

\maketitle

\begin{abstract}


For $k \geq 3$, we prove (i) there is a finite number of $k$-vertex-critical $(P_2+\ell P_1)$-free graphs and (ii)  $k$-vertex-critical $(P_3+P_1)$-free graphs have at most $2k-1$ vertices.  Together with previous research, these results imply the following characterization where $H$ is a graph of order four:  There is a finite number of $k$-vertex-critical $H$-free graphs for fixed $k \geq 5$ if and only if $H$ is one of  $\overline{K_4}, P_4, P_2 + 2P_1$, or  $P_3 + P_1$. Our results imply the existence of new polynomial-time certifying algorithms for deciding the $k$-colorability of $(P_2+\ell P_1)$-free graphs for fixed $k$.

\end{abstract}

\section{Introduction}

There are 11 non-isomorphic simple unlabeled graphs with 4 vertices as illustrated below:  
\begin{center}
\resizebox{5in}{!}{\includegraphics{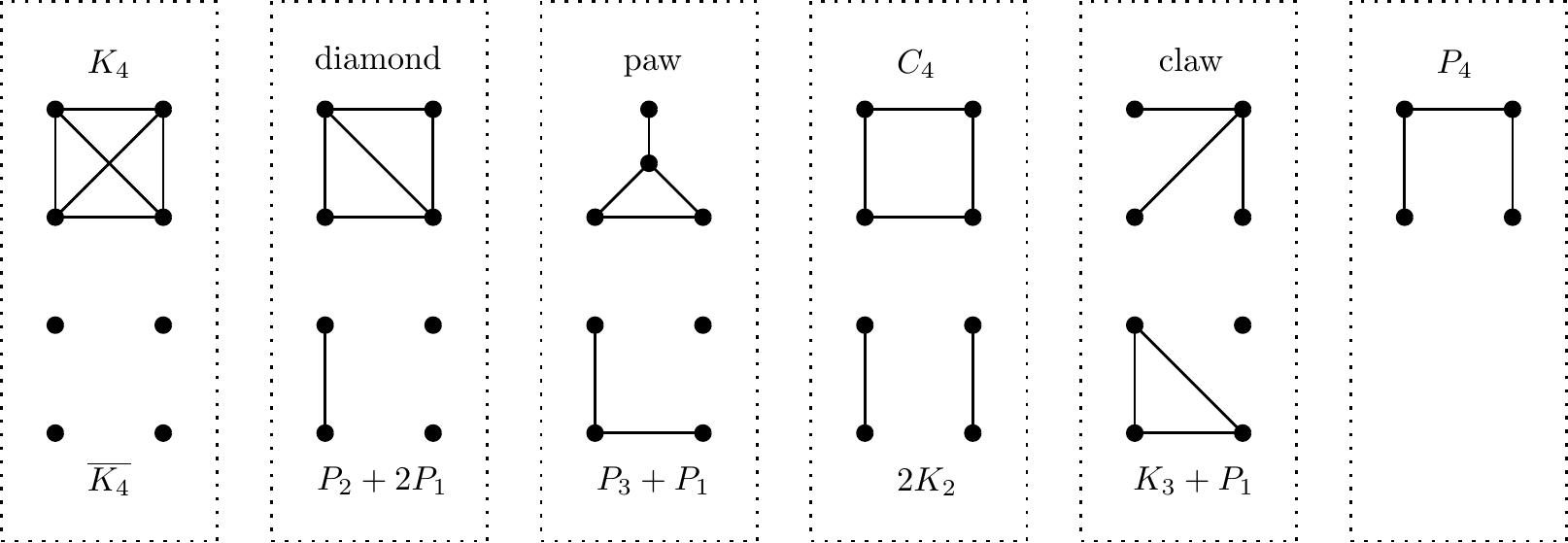}}
\end{center}
The graphs in each column are complements of each other; $P_4$ is self-complementary.  In this paper we characterize whether or
not there is a finite number of $k$-vertex-critical $H$-free graphs, where $k \geq 3$ and $H$ has four vertices.  The current state of the art, as discussed 
in Section~\ref{sec:results},  is illustrated in Table~\ref{tab:stateofart}.

\begin{table}[h]
\begin{center}
\begin{tabular} {c  |  c c c c c c c c c c c} \small
$k/H $  	& $K_4$ 	&  $\overline{K_4}$ 	&  diamond 	& $P_2 + 2P_1$ &  paw 	& $P_3 + P_1$ &  $2K_2$ & $C_4$ &  claw 	& $K_3 + P_1$  &  $P_4$ \\ \hline
3  		& $\infty$ 		& 3				& $\infty$ 			& 2 			  & $\infty$ 	& 2 			& 2 		 & $\infty$ 	& $\infty$ 		& $\infty$  	 	& 1 	    \\
4 		& $\infty$ 		& finite			& $\infty$ 			& 9			  & $\infty$ 	& 8 			& 7 		 & $\infty$ 	& $\infty$  	& $\infty$  	 	& 1 	    \\
5+ 		& $\infty$ 		& finite				& $\infty$ 		& \red{?}		  	& $\infty$ 		& finite		& $\infty$ 		 & $\infty$ 	& $\infty$  	& $\infty$  	 	& 1 	    \\
\end{tabular}
\end{center}
\vspace{-0.15in}
\caption{The number of $k$-vertex-critical $H$-free graphs}
\label{tab:stateofart}
\end{table}

To answer the open question in the table, we demonstrate for all $k \geq 3$ that  there is a finite number of $k$-vertex-critical  ($P_2 + \ell P_1$)-free graphs  for all $\ell \geq 1$. 
Additionally, we provide a simple proof that $k$-vertex-critical  ($P_3 + P_1$)-free graphs have at most $2k-1$ vertices.  
This leads to the following dichotomy theorem.

\begin{theorem} \label{thm:dichotomy}
Let $H$ be a graph containing at most four vertices.
There is a finite number of $k$-vertex-critical $H$-free graphs for fixed $k \geq 5$ if and only if $H$ is an induced subgraph of  $\overline{K_4}, P_4, P_2 + 2P_1$, or  $P_3 + P_1$.
\end{theorem}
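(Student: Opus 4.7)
The plan is to combine the two new finiteness results of this paper with the prior entries summarized in Table~\ref{tab:stateofart}, using an elementary monotonicity: if $H_1$ is an induced subgraph of $H_2$, then every $H_1$-free graph is automatically $H_2$-free, so the family of $k$-vertex-critical $H_1$-free graphs is contained in the family of $k$-vertex-critical $H_2$-free graphs. Consequently, finiteness propagates downward along induced-subgraph containment in $H$, while infiniteness propagates upward.

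For the ``if'' direction, it suffices to establish finiteness for the four ambient graphs $\overline{K_4}$, $P_4$, $P_2+2P_1$, and $P_3+P_1$, and then invoke monotonicity. The $\overline{K_4}$-free and $P_4$-free cases are classical and are recorded in Table~\ref{tab:stateofart}; the $(P_2+2P_1)$-free case is the instance $\ell=2$ of the new finiteness theorem for $(P_2+\ell P_1)$-free $k$-vertex-critical graphs; and for $(P_3+P_1)$-free graphs the explicit bound $|V(G)|\le 2k-1$ from the second new theorem of the paper gives finiteness immediately. Monotonicity then transfers finiteness to every $H$ that is an induced subgraph of one of the four.

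For the ``only if'' direction, I would enumerate the graphs on at most four vertices that are \emph{not} induced subgraphs of any of $\overline{K_4}$, $P_4$, $P_2+2P_1$, or $P_3+P_1$. A short case analysis over the eleven 4-vertex isomorphism types and the seven isomorphism types on at most three vertices shows that this list consists of exactly $K_3$ together with the seven 4-vertex graphs $K_4$, diamond, paw, $2K_2$, $C_4$, claw, and $K_3+P_1$. For each of these the infiniteness is already known: the seven 4-vertex cases are the ``$\infty$'' entries in Table~\ref{tab:stateofart} from the prior literature cited in Section~\ref{sec:results}, and the case $H=K_3$ is handled by the classical existence of triangle-free graphs of arbitrarily large chromatic number (hence of infinitely many $k$-vertex-critical triangle-free graphs for every $k\ge 4$).

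The main obstacle does not lie in the dichotomy itself, which is essentially a bookkeeping step over the short list of graphs on at most four vertices; all the real work is carried by the two new finiteness results for $(P_2+\ell P_1)$-free and $(P_3+P_1)$-free $k$-vertex-critical graphs established elsewhere in the paper, together with the careful enumeration of induced-subgraph relations needed to verify that the complementary list above is exactly $\{K_3, K_4, \textrm{diamond}, \textrm{paw}, 2K_2, C_4, \textrm{claw}, K_3+P_1\}$.
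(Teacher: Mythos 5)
Your proposal is correct and follows essentially the same route as the paper: the finiteness side rests on the known results for $\overline{K_4}$ and $P_4$ together with the paper's two new theorems for $(P_2+\ell P_1)$-free and $(P_3+P_1)$-free graphs, the infiniteness side is the known list of bad four-vertex graphs, and everything else reduces by induced-subgraph monotonicity (your explicit treatment of the sub-four-vertex graphs is in fact slightly more careful than the paper's). One small nitpick: for $H=K_3$, the mere existence of triangle-free graphs of unbounded chromatic number does not by itself yield infinitely many $k$-vertex-critical triangle-free graphs (the critical induced subgraphs could in principle repeat), so you should invoke the large-girth version of Erd\H{o}s's theorem, which is exactly Fact~\ref{fact:infifcycle} of the paper and already covers $K_3=C_3$.
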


While classifying vertex-critical graphs is of interest in its own right, there are also algorithmic consequences that arise from determining that a class of graphs has only finitely many $k$-vertex-critical graphs. An algorithm is \textit{certifying} if together with each output it also includes a simple and easily verifiable witness that the output is correct. For the problem of $k$-\textsc{Coloring}, that is, determining if a graph is $k$-colorable for fixed $k$, an algorithm that returns ``yes'' could also include a $k$-coloring as a certificate. If the algorithm returns ``no'' to this question, then a certificate could be an induced subgraph that is $(k+1)$-vertex-critical, since any graph that is not $k$-colorable must contain a $(k+1)$-vertex-critical induced subgraph. If we restrict the input graphs to a class of graphs that contains only finitely many $(k+1)$-vertex-critical graphs, then there is a polynomial-time algorithm to solve $k$-\textsc{Coloring} that can also return a no-certificate by searching for all $(k+1)$-vertex-critical graphs as induced subgraphs of the input graph (see \cite{P5banner2019} for more details). This is significant as $k$-\textsc{Coloring} is NP-complete in general \cite{Karp1972}. Moreover, $k$-\textsc{Coloring} $H$-free graphs remains NP-complete for all $k\ge 3$ if $H$ contains a cycle \cite{KaminskiLozin2007} or a claw \cite{Holyer1981, LevenGail1983}.

The remainder of this paper is presented as follows.
In Section~\ref{sec:back} we provide essential background definitions.
In Section~\ref{sec:results} we present the necessary background to prove our main result.  
In Section~\ref{sec:mainproof} we prove our dichotomizing Theorem~\ref{thm:dichotomy}. 
In Section~\ref{sec:copaw} we present results for $k$-vertex-critical ($P_3+P_1$)-free graphs. 
We conclude with future directions and open problems in Section~\ref{sec:openproblems}.

\subsection{Preliminary background, definitions, and notation}  \label{sec:back}

Let $G = (V(G),E(G))$ denote a simple undirected graph with vertex set $V(G)$ and edge set $E(G)$.  Let $\overline{G}$ denote the complement of $G$. The order of $G$ is number of vertices in $G$, i.e. $|V(G)|$.
Let $K_t$ denote the complete graph on $t \geq 1$ vertices, let  $C_t$ denote the induced cycle on $t \geq 3$ vertices, and let $P_t$ denote the induced path on $t\geq 1$ vertices.
For graphs $H_1,H_2,\ldots, H_j$, a graph is $(H_1,H_2,\ldots,H_j)$-free if it does not contain $H_i$ as an induced subgraph for any $i=1,2,\ldots,j$. If $j=1$, we simply write $H_1$-free.
The join of two graphs $G$ and $H$ is denoted $G\vee H$.  The disjoint union of two graphs is denoted $G+H$. For $S\subseteq V(G)$, let $G-S$ denote the graph obtained by removing all vertices in $S$ from $G$ along with all of their incident edges. If $S=\{v\}$, we simply write $G-v$ for $G-\{v\}$.

Let $\chi(G), \alpha(G)$,  and $\omega(G)$ denote the \textit{chromatic number} (the minimum number of colors required to color the vertices of $G$), the \textit{independence number} (the order of the largest independent set), and the \textit{clique number} (the order of the largest clique) of $G$, respectively.  Let $R(r,s)$, for $r,s \geq 1$,  denote the Ramsey numbers which have the following property:

\begin{theorem}[Ramsey's Theorem \cite{Ramsey}]
There exists a least positive integer, denoted $R(r,s)$, such that every graph $G$ with at least $R(r,s)$ vertices contains either a clique on $r$ vertices or an independent set on $s$ vertices.
\end{theorem}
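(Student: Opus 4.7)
The plan is to prove existence of $R(r,s)$ by induction on $r+s$, establishing along the way the classical recursive upper bound
\[ R(r,s) \leq R(r-1,s) + R(r,s-1). \]
The base cases are trivial: $R(1,s) = R(r,1) = 1$, since a single vertex is simultaneously a clique and an independent set of size $1$. (One can equivalently take $R(2,s) = s$ and $R(r,2) = r$ as cleaner base cases, observing that an edgeless graph on $s$ vertices is an independent set of size $s$, and any edge is a $K_2$.)

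For the inductive step, fix $r,s \geq 2$ and set $N = R(r-1,s) + R(r,s-1)$, where the two summands exist by the inductive hypothesis. I will show that any graph $G$ on $N$ vertices contains either a $K_r$ or an independent set of size $s$. Pick any vertex $v \in V(G)$ and partition $V(G) \setminus \{v\}$ into $A$, the neighbors of $v$, and $B$, the non-neighbors of $v$. Since $|A| + |B| = N - 1$, the pigeonhole principle forces either $|A| \geq R(r-1,s)$ or $|B| \geq R(r,s-1)$. In the first case, the inductive hypothesis applied to $G[A]$ yields either an independent set of size $s$ (and we are done), or a clique on $r-1$ vertices which, together with $v$ (adjacent to all of $A$), extends to a $K_r$. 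The second case is symmetric: $G[B]$ contains either a $K_r$ or an independent set of size $s-1$, the latter extending with $v$ to an independent set of size $s$ since $v$ has no neighbor in $B$.

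This establishes that the set of positive integers $N$ with the stated Ramsey property is nonempty, whence the well-ordering principle on $\mathbb{N}$ guarantees a least such integer, which we define to be $R(r,s)$. There is no genuine obstacle here beyond getting the recursion and base cases correctly aligned; the crux is the pigeonhole split on the neighborhood of a chosen vertex, which is the standard Erd\H{o}s--Szekeres style argument.
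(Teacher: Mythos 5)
Your argument is the standard Erd\H{o}s--Szekeres proof of Ramsey's theorem and it is correct: the base cases, the pigeonhole split of $V(G)\setminus\{v\}$ into neighbours and non-neighbours, and the recursion $R(r,s)\le R(r-1,s)+R(r,s-1)$ are all handled properly, and the well-ordering step legitimately yields the \emph{least} such integer. Note that the paper does not prove this statement at all --- it is quoted as a known result with a citation to Ramsey's original paper --- so there is no in-paper proof to compare against; your write-up is simply the canonical textbook argument for the cited fact.
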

\noindent
%
%

%

\subsection{Results on $k$-vertex-critical graphs}  \label{sec:results}

A graph $G$ is \textit{$k$-vertex-critical} if $\chi(G)=k$ but $\chi(G-v)<k$ for all $v\in V$.  
The only $k$-vertex-critical graphs for $k\le 2$ are the complete graphs, $K_1$ and $K_2$.    When $k=3$, the following well-known fact implies that there is an infinite number of 3-vertex-critical graphs. 
\begin{fact} \label{rem:3critical}
 $G$ is 3-vertex-critical if and only if $G$ is $C_{2n+1}$ for $n \geq 1$.
\end{fact}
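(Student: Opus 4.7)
The plan is to prove both implications, with the forward direction being immediate and the reverse direction requiring a short structural argument about odd cycles.

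For the forward direction, I would observe that $\chi(C_{2n+1}) = 3$ (since an odd cycle is not bipartite but admits a proper 3-coloring by alternating two colors along $2n$ vertices and using the third color on the last), while removing any vertex from $C_{2n+1}$ yields the path $P_{2n}$, which is bipartite and hence 2-colorable. Thus $\chi(C_{2n+1}-v) = 2 < 3$ for all $v$, so $C_{2n+1}$ is 3-vertex-critical.

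For the reverse direction, suppose $G$ is 3-vertex-critical. I would proceed in three steps. First, since $\chi(G) = 3$, $G$ is not bipartite and therefore contains at least one odd cycle; let $C$ be any such odd cycle. Second, I claim $V(C) = V(G)$: for any $v \in V(G)$, vertex-criticality gives $\chi(G-v) \le 2$, so $G-v$ is bipartite and contains no odd cycle; in particular $C \not\subseteq G-v$, forcing $v \in V(C)$. Third, I would show $G$ has no edges other than those of $C$. Any such edge would be a chord $uv$ of $C$, which together with the two arcs of $C$ between $u$ and $v$ produces two cycles whose edge counts sum to $|E(C)| + 2 = 2n+3$; hence exactly one of them has odd length, and since each arc has length at least $2$, the odd cycle so obtained has fewer than $2n+1$ vertices. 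Applying the conclusion of the second step to this smaller odd cycle yields a contradiction, since it cannot cover $V(G)$. Therefore $G = C$ is an odd cycle.

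The main (minor) obstacle is just making the chord-splits-the-cycle parity computation precise, together with noting that a chord forces each arc to have length at least $2$ so that both resulting cycles are genuine cycles of length at least $3$; once that is in place, Step 2 applied to the shorter odd cycle closes the argument.
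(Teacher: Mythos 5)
Your argument is correct and complete: the forward direction is the standard computation, and the reverse direction (every vertex of a 3-vertex-critical graph lies on every odd cycle, hence any chord would create a shorter odd cycle that fails to cover $V(G)$) is a clean and fully rigorous way to pin down $G$ as a chordless spanning odd cycle. The paper itself offers no proof of this statement, presenting it only as a well-known fact, so there is nothing to compare against; your write-up would serve as a valid self-contained justification, with the only cosmetic caveat being that the length $2n+1$ in your Step 3 should be introduced explicitly as $|V(C)|=|V(G)|$ (which is forced to be odd by Step 2) rather than assumed from the statement.
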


\noindent
Based on this fact, note that $C_{2n+1}\vee K_m$ is $(3+m)$-vertex-critical for all $n\ge 1$.
Thus, there is an infinite number of $k$-vertex-critical graphs for all $k \geq 4$.
However, when we restrict the structure of the graphs in question by forbidding certain induced subgraphs, we can be left with finitely many $k$-vertex-critical graphs for some values of $k$. A simple example is to forbid $P_3$. In this case, we are left with the disjoint union of cliques, and hence the only $k$-vertex-critical $P_3$-free graph is $K_k$ for all $k\ge 3$. An old result of Seinche \cite{Seinche} showed that for a $P_4$-free graph $G$, we have $\chi(G) = \omega(G)$. Thus, the following fact holds.

%
\begin{fact}\label{fact:P4freeperfect}
The only $k$-vertex critical $P_4$-free graph is $K_k$, for $k \geq 1$.
\end{fact}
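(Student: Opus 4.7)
The plan is to derive the fact as a short consequence of Seinche's theorem, which the excerpt quotes: for any $P_4$-free graph $G$, $\chi(G) = \omega(G)$. Since this equality is hereditary (every induced subgraph of a $P_4$-free graph is $P_4$-free), it applies to $G$ and to every $G - v$.

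Concretely, let $G$ be any $k$-vertex-critical $P_4$-free graph. First I would apply Seinche's theorem to $G$ to conclude $\omega(G) = \chi(G) = k$, and then select an induced copy of $K_k$ inside $G$, say on a vertex set $S$ with $|S| = k$. The key step is then to show $V(G) = S$. Suppose for contradiction that there exists a vertex $v \in V(G) \setminus S$. The graph $G - v$ is still $P_4$-free (induced-subgraph-closed) and still contains $K_k$ as an induced subgraph (on the set $S$), hence $\omega(G-v) \geq k$, and applying Seinche's theorem once more gives $\chi(G-v) \geq k$. This contradicts the definition of $k$-vertex-criticality, which requires $\chi(G-v) < k$. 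Therefore $V(G) = S$ and $G = K_k$.

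For the low values of $k$ stated in the claim, the base cases $k = 1$ and $k = 2$ are immediate from the definition of vertex-criticality and do not even require Seinche's theorem: the only graph of chromatic number $1$ is $K_1$, and the only $2$-vertex-critical graph is $K_2$. The argument above then covers $k \geq 3$ uniformly. There is no real obstacle here; the statement is essentially a one-line corollary of Seinche's $\chi = \omega$ theorem once one observes that criticality forbids any "extra" vertex beyond a maximum clique.
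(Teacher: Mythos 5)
Your proof is correct and follows exactly the route the paper intends: the paper simply states the fact as an immediate consequence of Seinche's theorem ($\chi=\omega$ for $P_4$-free graphs), and your argument just spells out the short clique-plus-criticality deduction that the paper leaves implicit. (Note the final step does not even need a second application of Seinche, since $\chi(G-v)\ge\omega(G-v)\ge k$ holds for any graph.)
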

\noindent By applying Ramsey's Theorem, we get the following.
\begin{fact}\label{fact:stable}
There is a finite number of $k$-vertex-critical $\overline{K_k}$-free graphs, where $k \geq 1$.
\end{fact}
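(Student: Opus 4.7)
The plan is to use Ramsey's Theorem together with two basic facts about $k$-vertex-critical graphs to bound the order of any $k$-vertex-critical $\overline{K_k}$-free graph, which immediately yields finiteness since there are only finitely many graphs of bounded order.

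First I would observe the two structural constraints that a $k$-vertex-critical $\overline{K_k}$-free graph $G$ must satisfy. On one hand, $G$ is $\overline{K_k}$-free by hypothesis, so $\alpha(G) \le k-1$. On the other hand, since $\chi(G) = k$ we have $\omega(G) \le \chi(G) = k$, so $G$ cannot contain $K_{k+1}$ as an induced (indeed, as a) subgraph.

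Next I would apply Ramsey's Theorem with parameters $r = k+1$ and $s = k$. Any graph on at least $R(k+1, k)$ vertices must contain either $K_{k+1}$ or $\overline{K_k}$ as an induced subgraph. But we just ruled out both possibilities for $G$. Therefore $|V(G)| < R(k+1, k)$, giving an explicit (if enormous) upper bound on the order of any $k$-vertex-critical $\overline{K_k}$-free graph.

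Finally, since there are only finitely many graphs on fewer than $R(k+1, k)$ vertices up to isomorphism, the number of $k$-vertex-critical $\overline{K_k}$-free graphs is finite. There is no real obstacle here; the argument is essentially a one-line Ramsey application. The only mild point to flag is that the bound produced is not tight, but tightness is not needed for a finiteness statement.
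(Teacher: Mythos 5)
Your argument is correct and is exactly the intended one: the paper states this fact with only the remark ``By applying Ramsey's Theorem,'' and the bound $\omega(G)\le\chi(G)=k$ combined with $\alpha(G)\le k-1$ and $R(k+1,k)$ is precisely the application it has in mind (the same computation appears explicitly in the paper's proof of Theorem~\ref{thm:finitelymanykcritP22P1free}). No issues.
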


\noindent
Forbidding small graphs, however, does not always lead to a finite number of $k$-vertex-critical graphs.  For instance, the following three results are known.  

\begin{fact}[\cite{Hoang2015}]\label{fact:infif2K2}
There is an infinite number of $k$-vertex-critical $2K_2$-free graphs, where $k \geq 5$.
\end{fact}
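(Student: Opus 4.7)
The plan is to prove the statement by exhibiting, for each fixed $k \ge 5$, an explicit infinite family of pairwise non-isomorphic $k$-vertex-critical $2K_2$-free graphs. The overall strategy reduces the task to the base case $k=5$: if $\{H_n\}_{n \geq 1}$ is an infinite family of $5$-vertex-critical $2K_2$-free graphs, then $G_n = H_n \vee K_{k-5}$ is $k$-vertex-critical (the join of vertex-critical graphs is vertex-critical and $\chi(G_1 \vee G_2) = \chi(G_1) + \chi(G_2)$) and $2K_2$-free (both summands are $2K_2$-free, and any induced $2K_2$ with vertices on both sides of the join would require a non-edge across the join, which does not exist in a join). Hence it is enough to produce an infinite family of $5$-vertex-critical $2K_2$-free graphs.

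For the base case, I would start from $\overline{C_9}$, which is $5$-vertex-critical and $2K_2$-free (the latter because a graph is $2K_2$-free iff its complement is $C_4$-free, and $C_9$ has no induced $C_4$). To lift this single example to an infinite family, I would build graphs of arbitrarily large order by an operation that preserves both $5$-criticality and $2K_2$-freeness. A promising candidate is a clique-amalgamation (rather than the Haj\'os edge-amalgamation): glue two $5$-critical $2K_2$-free graphs along a common clique large enough that the complement-side adjacencies never form an induced $C_4$ across the amalgamation. Parametrizing the clique size or the constituent graphs then yields a strictly increasing sequence of orders, and hence infinitely many non-isomorphic graphs. An alternative would be a direct circulant construction, tailored so that the cyclic symmetry simultaneously enforces $5$-chromaticity, vertex-criticality, and the absence of two disjoint non-adjacent edges.

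The main obstacle is the base construction. Most standard operations for building $k$-critical graphs (Haj\'os, Mycielski, blow-ups, subdivisions) introduce an induced $2K_2$, equivalently an induced $C_4$ in the complement, so they cannot be applied naively. A successful construction must verify, typically via a case analysis over the types of edges created, that no two disjoint non-adjacent edges are ever produced, while simultaneously showing $\chi = 5$ and exhibiting a proper $4$-coloring of every single-vertex deletion. Once such a base family is in hand, the lift to arbitrary $k \ge 5$ via the join with $K_{k-5}$ is routine.
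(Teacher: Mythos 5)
This fact is not proved in the paper at all; it is imported from \cite{Hoang2015}, where an explicit infinite family of $k$-vertex-critical $2K_2$-free graphs (the graphs $H_p$) is constructed and verified for every $k\ge 5$. Measured against that, your proposal has a genuine gap: the reduction to $k=5$ via $H_n\vee K_{k-5}$ is sound (it is exactly Lemma~\ref{lem:critjoin}, and a join cannot contain an induced $2K_2$ meeting both sides), and $\overline{C_9}$ is indeed a $5$-vertex-critical $2K_2$-free graph, but the statement requires an \emph{infinite} family and you produce only this one graph together with two unverified candidate operations. The infinite base construction is the entire content of the fact --- as you yourself concede by calling it ``the main obstacle'' --- so nothing is actually established.

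Moreover, the first of your two candidates cannot be repaired. If $G$ is obtained by amalgamating $G_1$ and $G_2$ along a common clique $K$, then every vertex of $V(G_1)\setminus K$ is non-adjacent to every vertex of $V(G_2)\setminus K$, so any edge inside $V(G_1)\setminus K$ together with any edge inside $V(G_2)\setminus K$ induces a $2K_2$; no choice of clique size avoids this. Escaping it forces $V(G_i)\setminus K$ to be independent for some $i$, making $G_i$ a split graph, hence perfect, whose only $5$-vertex-critical induced subgraph is $K_5$ --- and gluing part of a $K_5$ onto a clique of $G_1$ does not yield a new $5$-vertex-critical graph. The circulant idea is the one that actually works in \cite{Hoang2015}, but there the graphs are written down explicitly and the $5$-colorability of every vertex-deleted subgraph, the $5$-chromaticity, and the $2K_2$-freeness are each checked; your proposal contains none of that verification, so it does not prove the fact.
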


\noindent A classical result of Erd\H{o}s \cite{Erdos} that for all $k$, there exist $k$-chromatic graphs of arbitrarily large girth gives the next fact.

\begin{fact}\label{fact:infifcycle}
If $H$ contains an induced cycle, then there is an infinite number of $k$-vertex-critical $H$-free graphs, where $k \geq 3$.
\end{fact}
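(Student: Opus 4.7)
The plan is to leverage the Erdős theorem that for every $k$ and every $g$ there exist graphs of chromatic number at least $k$ and girth greater than $g$, and then extract $k$-vertex-critical induced subgraphs whose orders grow without bound.

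First, let $\ell$ be the length of some induced cycle of $H$, so $3 \leq \ell \leq |V(H)|$. Fix an arbitrary integer $N$. By Erdős's theorem, there exists a graph $G$ with $\chi(G) \geq k$ and girth strictly greater than $\max\{|V(H)|, N\}$. Since the girth of $G$ exceeds $|V(H)|$, the graph $G$ contains no cycle (induced or otherwise) of length at most $|V(H)|$; in particular, $G$ cannot contain $H$ as an induced subgraph, because any such copy of $H$ would include a cycle of length $\ell \leq |V(H)|$. Hence $G$ is $H$-free.

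Next, since $\chi(G) \geq k$, choose $G'$ to be an induced subgraph of $G$ of minimum order subject to $\chi(G') \geq k$. Minimality forces $\chi(G'-v) < k$ for every vertex $v$, and since chromatic number drops by at most one upon vertex removal we also have $\chi(G') \leq k$; therefore $G'$ is $k$-vertex-critical. As an induced subgraph of $G$, $G'$ is $H$-free and has girth at least as large as that of $G$, hence greater than $N$. Because $k \geq 3$, $G'$ is not bipartite and therefore contains a cycle, so the number of vertices of $G'$ is at least its girth, which exceeds $N$.

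Letting $N$ tend to infinity produces $k$-vertex-critical $H$-free graphs of unbounded order, and thus infinitely many pairwise non-isomorphic such graphs. The only nontrivial ingredient is Erdős's high-girth, high-chromatic-number theorem; once that is invoked, the remaining steps (checking $H$-freeness from the girth condition, passing to a vertex-critical induced subgraph, and using non-bipartiteness to force large order) are essentially bookkeeping. The main conceptual step to get right is the $H$-freeness argument: we must forbid not only induced cycles of length $\ell$ but all cycles of length at most $|V(H)|$, which is exactly what the girth condition guarantees.
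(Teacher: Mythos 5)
Your proof is correct and follows exactly the route the paper intends: the paper derives this fact in one line from Erd\H{o}s's theorem on graphs of large chromatic number and large girth, and your argument is simply the careful write-out of that derivation (girth beyond $|V(H)|$ gives $H$-freeness, a minimal induced subgraph with chromatic number $k$ is $k$-vertex-critical, and non-bipartiteness forces its order to exceed the girth).
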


\begin{fact}\label{fact:infifclaw}
There is an infinite number of $k$-vertex-critical claw-free graphs, where $k \geq 3$.
\end{fact}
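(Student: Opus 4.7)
The plan is to exhibit, for each $k\ge 3$, an explicit infinite family of $k$-vertex-critical claw-free graphs. For $k=3$, the odd cycles $C_{2n+1}$ with $n\ge 1$ already suffice: they are $3$-vertex-critical by Fact~\ref{rem:3critical}, and having maximum degree $2$ they are claw-free. For $k\ge 4$ I would use powers of cycles: set $p=k-2$ and let $G_n=C_n^{p}$, the graph on $\{0,1,\dots,n-1\}$ in which $i\sim j$ iff the cyclic distance between $i$ and $j$ lies in $\{1,\dots,p\}$. I claim that, for all sufficiently large $n$ with $(k-1)\nmid n$, $G_n$ is $k$-vertex-critical and claw-free; since distinct values of $n$ give graphs of distinct orders, the family is infinite.

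Claw-freeness follows from the interval structure of the neighborhoods. For $n\ge 2p+2$, every vertex $v$ has neighborhood $\{v\pm 1,\dots,v\pm p\}$, and two neighbors $v+i$ and $v+j$ are non-adjacent iff $|i-j|>p$. Three pairwise non-adjacent neighbors would require indices in $[-p,p]\setminus\{0\}$ with every pairwise gap exceeding $p$, forcing a total spread greater than $2p$ inside a window of width $2p$, which is impossible. Hence no vertex is the center of a claw.

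For the chromatic number, any $k-1$ consecutive vertices form a $K_{k-1}$, so $\chi(G_n)\ge k-1$. The cyclic coloring $i\mapsto i \bmod (k-1)$ is proper precisely when $(k-1)\mid n$, so $\chi(G_n)\ge k$ whenever $(k-1)\nmid n$. For the matching upper bound, I would run the pattern $1,2,\dots,k-1$ cyclically and absorb the wrap-around remainder with one additional color, recovering the classical value $\chi(C_n^{p})=\lceil n/\lfloor n/(p+1)\rfloor\rceil$, which equals $k$ for all large $n$ with $(k-1)\nmid n$. Vertex-criticality is then immediate: deleting any vertex leaves (up to rotation) the path power $P_{n-1}^{p}$, which is properly $(k-1)$-colored by $i\mapsto i \bmod (k-1)$ since a path has no wrap-around. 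Therefore $\chi(G_n-v)=k-1<k=\chi(G_n)$ for every $v$. The only step requiring genuine care is the upper bound $\chi(G_n)\le k$; the remaining arguments are immediate from the consecutive-interval structure of $G_n$, so that chromatic-number estimate is the main obstacle.
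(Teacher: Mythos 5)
Your family is the right kind of object but the criticality step is wrong, and it is that step, not the upper bound on $\chi(G_n)$, that needed the care. For $p=k-2\ge 2$, deleting a vertex from $C_n^{p}$ does \emph{not} leave $P_{n-1}^{p}$: if you delete vertex $0$, then $1$ and $n-1$ are still at cyclic distance $2\le p$ and hence still adjacent, and in general all wrap-around edges between vertices near the deleted one survive. As a result $\chi(G_n-v)$ need not drop, and your claim that every sufficiently large $n$ with $(k-1)\nmid n$ works is false. Concretely, write $n=q(k-1)+r$ with $2\le r\le k-2$. Since $\alpha(C_n^{p})=\lfloor n/(p+1)\rfloor=q$ and deleting a vertex cannot increase the independence number, $\chi(G_n-v)\ge \lceil (n-1)/q\rceil=\lceil (k-1)+(r-1)/q\rceil=k$ for large $n$, so $G_n$ is $k$-chromatic but not $k$-vertex-critical. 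The smallest counterexample is $C_8^{2}$ for $k=4$: it is $4$-chromatic, yet $C_8^{2}-v$ has $7$ vertices and independence number $2$, so $\chi(C_8^{2}-v)\ge 4$ as well.

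The construction is salvageable: restrict to $n\equiv 1\pmod{k-1}$. Then $n-1=q(k-1)$, and on $\{1,\dots,n-1\}$ the coloring $i\mapsto i\bmod (k-1)$ is proper in $G_n-0$, because consecutive members of a color class differ by $k-1>p$ while the first and last members of a class are at cyclic distance $k>p$; vertex-transitivity then gives $\chi(G_n-v)=k-1$ for every $v$, and this residue class still contains infinitely many $n$. With that repair (and your correct claw-freeness and lower-bound arguments) the proof goes through, and it is genuinely different from the paper's: the paper instead takes an odd cycle $C_{2t+1}$, $t\ge 2$, and substitutes a clique of order $k-2$ for each even-labelled vertex, so that claw-freeness and $k$-criticality follow from the local structure and Fact~\ref{rem:3critical} without any arithmetic on $n$. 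As submitted, however, your argument certifies non-critical graphs as critical, so it does not yet prove the statement.
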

\noindent An example of an infinite family of $k$-vertex-critical claw-free graphs is the family of graphs obtained by taking an odd induced cycle with vertices labeled $1,2,\ldots, 2t+1$ ($t \geq 2$) and substituting a clique of order $k-2$ for each vertex with an even label.


The following result is known from concurrent research.  In Section~\ref{sec:copaw}, we provide a more concise proof of this result including a tight upper bound of $2k-1$ for the number of vertices in
$k$-vertex-critical ($P_3+P_1$)-free graph.

\begin{fact}[\cite{Cameron2020}]  \label{fact:P3P1} 
There is a finite number of $k$-vertex-critical ($P_3+P_1$)-free graphs, where $k \geq 3$.
\end{fact}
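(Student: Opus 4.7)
The plan is to exploit the fact that the complement of $P_3 + P_1$ on four vertices is the paw, so $G$ is $(P_3+P_1)$-free if and only if $\overline{G}$ is paw-free. I would then invoke the classical structural theorem of Olariu, which states that every connected paw-free graph is either triangle-free or complete multipartite, and carry out a case analysis based on the connectivity of $\overline{G}$.

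If $\overline{G}$ is connected, Olariu's result applies directly. The complete-multipartite sub-case with at least two parts would force $G$ to be a disjoint union of at least two cliques, hence disconnected, contradicting $k$-vertex-criticality for $k \geq 2$. Therefore $\overline{G}$ must be triangle-free, which gives $\alpha(G) \leq 2$. Since every color class in a proper coloring then has size at most two, we get $|V(G)| \leq 2\chi(G) = 2k$. To sharpen this to $2k - 1$, I would apply vertex-criticality: if $|V(G)| = 2k$, then for any $v \in V(G)$ the subgraph $G - v$ has $2k - 1$ vertices and still satisfies $\alpha \leq 2$, so $\chi(G-v) \geq \lceil (2k-1)/2 \rceil = k$, contradicting $\chi(G-v) = k-1$.

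If $\overline{G}$ is disconnected, then $G$ decomposes as a nontrivial join $G_1 \vee G_2$. Standard facts on joins give $\chi(G) = \chi(G_1) + \chi(G_2)$ and that $G$ is $k$-vertex-critical precisely when each $G_i$ is $\chi(G_i)$-vertex-critical; moreover each $G_i$ inherits the $(P_3+P_1)$-free property as an induced subgraph. Induction on $k$ (with trivial base cases $K_1$ and $K_2$, for which the claim is immediate) then yields $|V(G_i)| \leq 2\chi(G_i) - 1$, and summing gives $|V(G)| \leq 2k - 2 \leq 2k - 1$.

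The main obstacle is setting up the dichotomy cleanly via Olariu's theorem and handling the join reduction correctly; once the structural split is in place, the remainder is elementary counting together with a single application of the coloring bound forced by $\alpha(G) \leq 2$. Tightness of $2k-1$ for $k=3$ is already witnessed by $C_5$.
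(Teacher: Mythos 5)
Your proposal is correct and follows essentially the same route as the paper: the complement/paw-free dichotomy via Olariu's theorem, the join reduction with induction on $k$ when $\overline{G}$ is disconnected, and the bound $|V(G)|\le 2k-1$ from $\alpha(G)\le 2$ together with vertex-criticality. The only (immaterial) difference is that the paper additionally invokes a theorem of Stehl\'{\i}k to show the bound $2k-1$ is attained, which is not needed for finiteness.
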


\noindent  
When forbidding two induced subgraphs, there are more positive results with respect to finiteness. 
For instance,  there are exactly $13$ $5$-vertex-critical $(P_5,C_5)$-free graphs~ \cite{Hoang2015}.
More generally, there is a finite number of $k$-vertex-critical $(P_5,\overline{P_5})$-free graphs for all $k \geq 5$ as well as a structural characterization for these graphs \cite{Dhaliwal2017}.
It is also known that there are only finitely many $k$-vertex-critical $(P_t,K_{s,s})$-free graphs for all $k$, extending  an analogous result for $(P_6,C_4)$-free graphs~\cite{Hell2017} . 
There is a finite number of $4$- and $5$-vertex-critical $(P_6,banner)$-free graphs where $banner$ is the graph obtained from a $C_4$ by attaching a leaf to one of its vertices \cite{Huang2019}. 
As well, there are only finitely many $6$-vertex-critical $(P_5,banner)$-free graphs \cite{P5banner2019}.   
In recent work, it was shown for $k \geq 5$ that there is a finite number of $k$-vertex-critical $(P_5,H)$-free graphs, where $H$ has four vertices and $H$ is neither $2K_2$ nor $K_3+P_1$ ~\cite{Cameron2020}. 
One of the more significant recent result is the following dichotomy theorem:
%
\begin{theorem}[\cite{Chud4critical2020}]\label{thm:finite4critical}
Let $H$ be a graph. There is a finite number of $4$-vertex-critical $H$-free  graphs if and only if $H$ is an induced subgraph of $P_6$, $2P_3$, or $P_4+\ell P_1$ for some $\ell \in\mathbb{N}$.
\end{theorem}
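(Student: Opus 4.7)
The plan is to prove the dichotomy by separately establishing necessity and sufficiency.  For the \emph{only if} direction, I would first use Facts~\ref{fact:infifcycle} and~\ref{fact:infifclaw} to restrict attention to linear forests $H$: any $H$ containing a cycle or a claw already admits infinitely many $4$-vertex-critical $H$-free graphs, and a forest that is claw-free is a disjoint union of paths.  Within linear forests, I would identify the inclusion-minimal forests that are not induced subgraphs of $P_6$, $2P_3$, or $P_4 + \ell P_1$ for any $\ell \in \mathbb{N}$; this yields a short list including $P_7$, $2P_4$, $P_5+P_2$, $P_4+P_3$, and $3P_3$.  For each such minimal forbidden forest $F$, I would exhibit an explicit infinite family of $4$-vertex-critical $F$-free graphs — typically built by substituting small cliques or edges into odd cycles, or by Mycielski-style gadgets tuned to kill the specific path structure of $F$ while keeping the chromatic number exactly $4$ and preserving criticality.

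For the \emph{if} direction, I would split into three subcases.  The case $H = P_4 + \ell P_1$ is the most elementary: if $G$ is $4$-vertex-critical and $(P_4+\ell P_1)$-free, then for any induced $P_4$ in $G$ the set of vertices non-adjacent to all four of its vertices has independence number less than $\ell$; combined with Ramsey-type bounds on $\alpha(G)$ (using $\chi(G)=4$ and that $P_4$-free graphs are perfect by Fact~\ref{fact:P4freeperfect}) this controls $|V(G)|$.  For $H = 2P_3$, I would use that if $G$ is $2P_3$-free then, for any induced $P_3$, the vertices missing its closed neighborhood induce a $P_3$-free graph, i.e. a disjoint union of cliques; classifying vertices by their attachment pattern to the chosen $P_3$ gives finitely many types, and $4$-criticality bounds the number of vertices per type (otherwise two ``twin'' vertices could be identified without reducing $\chi$, contradicting criticality).

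The case $H = P_6$ is the main technical obstacle.  The plan is to invoke the known structural theory of $P_6$-free graphs, which decomposes every connected $P_6$-free graph via a clique cutset, a nontrivial module, or reduces it to a graph with a dominating induced subgraph of constant size.  Criticality immediately rules out clique cutsets and nontrivial modules in a $4$-vertex-critical graph, so $G$ must possess a dominating subgraph $D$ of bounded order.  Partitioning $V(G)\setminus D$ into the at most $2^{|D|}$ classes according to neighborhood pattern in $D$, and invoking criticality once more to forbid large twin-classes, produces a uniform bound on $|V(G)|$.  The required verification that each neighborhood type has bounded size is where most of the case analysis lies.

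The principal obstacle is the $P_6$-free case: it rests on a nontrivial structure theorem for $P_6$-free graphs and a delicate combination of dominating-set analysis with criticality to eliminate unbounded neighborhood classes.  The necessity direction, while requiring a construction for each minimal forbidden linear forest, is a compilation of explicit combinatorial families and is considerably less intricate than the $P_6$ analysis.
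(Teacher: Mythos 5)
First, note that the paper offers no proof of this statement to compare against: Theorem~\ref{thm:finite4critical} is imported wholesale from \cite{Chud4critical2020}, and the authors use it only to read off the four-vertex corollary (via the explicit list of eighty $4$-vertex-critical $P_6$-free graphs from \cite{Chudnovsky2020}). Judged on its own, your sketch has a concrete error already in the necessity direction: your list of ``inclusion-minimal'' forbidden linear forests is wrong. None of $P_7$, $2P_4$, $P_5+P_2$, $P_4+P_3$, $3P_3$ is minimal, because each properly contains $2P_2+P_1$ as an induced subgraph (for instance $v_1v_2$, $v_4v_5$, $v_7$ inside $P_7$, or an edge from each of two components plus a vertex of the third inside $3P_3$), and $2P_2+P_1$ is itself not an induced subgraph of $P_6$, $2P_3$, or $P_4+\ell P_1$. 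A short case check shows that a linear forest embeds in one of $P_6$, $2P_3$, $P_4+\ell P_1$ precisely when it has no induced $2P_2+P_1$, so $2P_2+P_1$ is the \emph{unique} minimal obstruction among linear forests; the entire necessity argument therefore rests on exhibiting an infinite family of $4$-vertex-critical $(2P_2+P_1)$-free graphs, which your proposal never supplies and which is not obtained by the clique-substituted odd cycles you mention.

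The sufficiency cases also have genuine gaps. For $P_6$ you invoke a decomposition of connected $P_6$-free graphs by clique cutsets, modules, or a dominating set of \emph{constant} size; no such theorem exists (the known result gives a dominating induced $C_6$ or a dominating complete bipartite subgraph of unbounded size), and criticality does not exclude nontrivial modules: $K_4$ is $4$-vertex-critical and every pair of its vertices is a module, and more generally joins of critical graphs are critical by Lemma~\ref{lem:critjoin}. The actual proof of the $P_6$ case in \cite{Chud4critical2020} is a lengthy structural analysis combined with computer search, not a bounded-dominating-set argument. For $P_4+\ell P_1$, Ramsey-bounding the set of vertices anticomplete to a fixed induced $P_4$ says nothing about the possibly enormous set of vertices that do have a neighbour on that $P_4$; and for $2P_3$, vertices with the same attachment pattern to a fixed $P_3$ need not be comparable or twins, so criticality does not bound the size of each of the eight classes. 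Each of these steps requires a substantively different argument from the one sketched.
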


\noindent
Applying this result to graphs $H$ on four vertices we obtain the following corollary.

\begin{corollary}
Let $H$ be a graph containing four vertices.
There is a finite number of $4$-vertex-critical $H$-free graphs if and only if $H$ is one of $\overline{K_4}, P_4,  P_2 + 2P_1, P_3 + P_1$, or $2K_2$.
\end{corollary}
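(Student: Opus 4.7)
The plan is to derive this corollary as an immediate case-check from Theorem~\ref{thm:finite4critical}. That theorem says that finiteness holds exactly when $H$ is an induced subgraph of $P_6$, $2P_3$, or $P_4 + \ell P_1$ for some $\ell \in \mathbb{N}$, so I only need to sift the $11$ four-vertex graphs pictured in the introduction through this criterion.

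For the forward (``if'') direction, I would exhibit each of $\overline{K_4}, P_4, P_2+2P_1, P_3+P_1, 2K_2$ as an induced subgraph of one of the three parent graphs. Three of these are trivial: $P_4$ is $P_4+0P_1$; $\overline{K_4}$ is the set of isolated vertices of $P_4 + 4P_1$; $P_3 + P_1$ and $P_2 + 2P_1$ embed into $P_4 + P_1$ and $P_4 + 2P_1$ respectively by taking an initial subpath of the $P_4$ together with the requisite isolated vertices. For $2K_2$, I would point to $P_6 = v_1v_2v_3v_4v_5v_6$ and observe that the vertex set $\{v_1,v_2,v_4,v_5\}$ induces exactly $2K_2$.

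For the reverse (``only if'') direction, I need to rule out the remaining six graphs: $K_4$, diamond, paw, $K_3+P_1$, $C_4$, and the claw $K_{1,3}$. I would argue by two simple structural invariants. First, $P_6$, $2P_3$, and $P_4 + \ell P_1$ are all triangle-free, which immediately disposes of $K_4$, diamond, paw, and $K_3+P_1$. Second, all three parent families have maximum degree at most $2$, which excludes the claw, and they contain no induced cycle at all, which excludes $C_4$ (and independently re-excludes the diamond and paw). Since each of the six forbidden cases fails one of these two invariants, none of them can appear as an induced subgraph of any member of the three families, so Theorem~\ref{thm:finite4critical} gives an infinite number of $4$-vertex-critical $H$-free graphs in each of these cases.

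There is no real obstacle here beyond careful bookkeeping of the eleven cases; the only minor subtlety is making sure that ``induced subgraph of $P_4+\ell P_1$ for \emph{some} $\ell$'' is applied with a large enough $\ell$ in the forward direction (which is why $\overline{K_4}$ is allowed despite $P_4$ itself containing no four independent vertices). Combining the two directions yields precisely the five-graph list in the corollary.
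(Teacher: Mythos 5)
Your proposal is correct and matches the paper's approach exactly: the paper derives this corollary by applying Theorem~\ref{thm:finite4critical} to the eleven four-vertex graphs, which is precisely the case-check you carry out (the paper simply omits the explicit verification). Your embeddings and your triangle-free/max-degree/acyclicity exclusions are all accurate.
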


Except for $\overline{K_4}$, we can determine the number of $4$-vertex-critical $H$-free graphs by filtering from the list of $80$ $4$-vertex-critical $P_6$-free graphs provided in \cite{Chudnovsky2020}.  In particular, there are nine 4-vertex-critical ($P_2 + 2P_1$)-free graphs, $G_1,G_2,\ldots, G_9$ in Figure~\ref{fig:4critP22P1free};  there are eight 4-vertex-critical ($P_3 + P_1$)-free graphs, $G_1,G_2,\ldots, G_8$ in Figure~\ref{fig:4critP22P1free};  there are seven 4-vertex-critical ($2K_2$)-free graphs, all those from Figure~\ref{fig:4critP22P1free} except for $G_4,G_5,G_6$ and $G_8$.  
Using the same list of 80 graphs there are at least 50 4-vertex-critical $\overline{K_4}$-free graphs and the only  4-vertex-critical $P_4$-free graph is $K_4$. 


\setcounter{subfigure}{0}
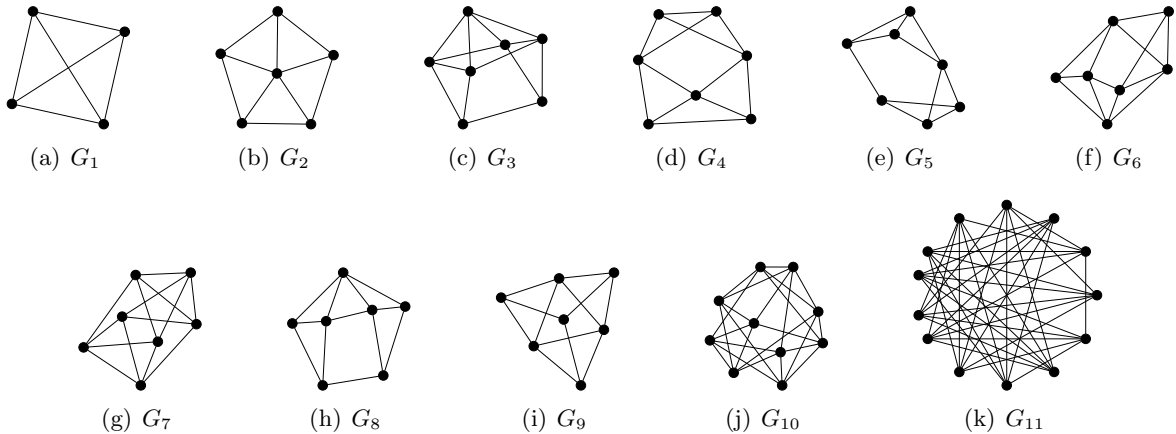
\begin{figure}[!h]
\def\c{0.3}
\def\r{1}
\centering
\subfigure[$G_1$]{
\scalebox{\c}{
\begin{tikzpicture}
\GraphInit[vstyle=Classic]
\Vertex[L=\hbox{},x=5.0cm,y=4.0987cm]{v0}
\Vertex[L=\hbox{},x=4.0675cm,y=0.0cm]{v1}
\Vertex[L=\hbox{},x=0.9343cm,y=5.0cm]{v2}
\Vertex[L=\hbox{},x=0.0cm,y=0.8987cm]{v3}
\Edge[](v0)(v1)
\Edge[](v0)(v2)
\Edge[](v0)(v3)
\Edge[](v1)(v2)
\Edge[](v1)(v3)
\Edge[](v2)(v3)
\end{tikzpicture}}}
\qquad
\subfigure[$G_2$]{
\scalebox{\c}{
\begin{tikzpicture}
\GraphInit[vstyle=Classic]
\Vertex[L=\hbox{},x=0.0cm,y=3.1235cm]{v0}
\Vertex[L=\hbox{},x=5.0cm,y=3.0618cm]{v1}
\Vertex[L=\hbox{},x=0.942cm,y=0.0422cm]{v2}
\Vertex[L=\hbox{},x=2.5253cm,y=5.0cm]{v3}
\Vertex[L=\hbox{},x=4.0006cm,y=0.0cm]{v4}
\Vertex[L=\hbox{},x=2.4923cm,y=2.2487cm]{v5}
\Edge[](v0)(v2)
\Edge[](v0)(v3)
\Edge[](v0)(v5)
\Edge[](v1)(v3)
\Edge[](v1)(v4)
\Edge[](v1)(v5)
\Edge[](v2)(v4)
\Edge[](v2)(v5)
\Edge[](v3)(v5)
\Edge[](v4)(v5)
\end{tikzpicture}}}
\qquad
\subfigure[$G_3$]{
\scalebox{\c}{
\begin{tikzpicture}
\GraphInit[vstyle=Classic]
\Vertex[L=\hbox{},x=1.4829cm,y=0.0cm]{v0}
\Vertex[L=\hbox{},x=5.0cm,y=3.784cm]{v1}
\Vertex[L=\hbox{},x=1.8219cm,y=2.3457cm]{v2}
\Vertex[L=\hbox{},x=3.3514cm,y=3.5216cm]{v3}
\Vertex[L=\hbox{},x=0.0cm,y=2.7571cm]{v4}
\Vertex[L=\hbox{},x=4.9936cm,y=1.0046cm]{v5}
\Vertex[L=\hbox{},x=1.7076cm,y=5.0cm]{v6}
\Edge[](v0)(v2)
\Edge[](v0)(v4)
\Edge[](v0)(v5)
\Edge[](v1)(v2)
\Edge[](v1)(v3)
\Edge[](v1)(v5)
\Edge[](v1)(v6)
\Edge[](v2)(v4)
\Edge[](v2)(v6)
\Edge[](v3)(v4)
\Edge[](v3)(v5)
\Edge[](v3)(v6)
\Edge[](v4)(v6)
\end{tikzpicture}}}
\qquad
\subfigure[$G_4$]{
\scalebox{\c}{
\begin{tikzpicture}
\GraphInit[vstyle=Classic]
\Vertex[L=\hbox{},x=0.0cm,y=2.8486cm]{v0}
\Vertex[L=\hbox{},x=2.5522cm,y=1.2761cm]{v1}
\Vertex[L=\hbox{},x=0.9047cm,y=4.8687cm]{v2}
\Vertex[L=\hbox{},x=5.0cm,y=0.2269cm]{v3}
\Vertex[L=\hbox{},x=3.4578cm,y=5.0cm]{v4}
\Vertex[L=\hbox{},x=0.4536cm,y=0.0cm]{v5}
\Vertex[L=\hbox{},x=4.8105cm,y=3.0363cm]{v6}
\Edge[](v0)(v1)
\Edge[](v0)(v2)
\Edge[](v0)(v4)
\Edge[](v0)(v5)
\Edge[](v1)(v3)
\Edge[](v1)(v5)
\Edge[](v1)(v6)
\Edge[](v2)(v4)
\Edge[](v2)(v6)
\Edge[](v3)(v5)
\Edge[](v3)(v6)
\Edge[](v4)(v6)
\end{tikzpicture}}}
\qquad
\subfigure[$G_5$]{
\scalebox{\c}{
\begin{tikzpicture}
\GraphInit[vstyle=Classic]
\Vertex[L=\hbox{},x=0.0cm,y=3.5735cm]{v0}
\Vertex[L=\hbox{},x=5.0cm,y=0.7618cm]{v1}
\Vertex[L=\hbox{},x=2.7975cm,y=5.0cm]{v2}
\Vertex[L=\hbox{},x=3.5566cm,y=0.0cm]{v3}
\Vertex[L=\hbox{},x=2.1067cm,y=3.9882cm]{v4}
\Vertex[L=\hbox{},x=1.5393cm,y=1.0533cm]{v5}
\Vertex[L=\hbox{},x=4.229cm,y=2.6376cm]{v6}
\Edge[](v0)(v2)
\Edge[](v0)(v4)
\Edge[](v0)(v5)
\Edge[](v1)(v3)
\Edge[](v1)(v5)
\Edge[](v1)(v6)
\Edge[](v2)(v4)
\Edge[](v2)(v6)
\Edge[](v3)(v5)
\Edge[](v3)(v6)
\Edge[](v4)(v6)
\end{tikzpicture}}}
\qquad
\subfigure[$G_6$]{
\scalebox{\c}{
\begin{tikzpicture}
\GraphInit[vstyle=Classic]
\Vertex[L=\hbox{},x=2.5386cm,y=4.5693cm]{v0}
\Vertex[L=\hbox{},x=1.4159cm,y=2.1384cm]{v1}
\Vertex[L=\hbox{},x=4.9459cm,y=2.4322cm]{v2}
\Vertex[L=\hbox{},x=2.2749cm,y=0.0cm]{v3}
\Vertex[L=\hbox{},x=5.0cm,y=5.0cm]{v4}
\Vertex[L=\hbox{},x=0.0cm,y=2.0599cm]{v5}
\Vertex[L=\hbox{},x=2.8306cm,y=1.5095cm]{v6}
\Edge[](v0)(v1)
\Edge[](v0)(v2)
\Edge[](v0)(v4)
\Edge[](v0)(v5)
\Edge[](v1)(v3)
\Edge[](v1)(v5)
\Edge[](v1)(v6)
\Edge[](v2)(v3)
\Edge[](v2)(v4)
\Edge[](v2)(v6)
\Edge[](v3)(v5)
\Edge[](v3)(v6)
\Edge[](v4)(v6)
\end{tikzpicture}}}
\qquad
\subfigure[$G_7$]{
\scalebox{\c}{
\begin{tikzpicture}
\GraphInit[vstyle=Classic]
\Vertex[L=\hbox{},x=0.0cm,y=1.685cm]{v0}
\Vertex[L=\hbox{},x=2.308cm,y=4.8945cm]{v1}
\Vertex[L=\hbox{},x=1.7094cm,y=3.0494cm]{v2}
\Vertex[L=\hbox{},x=4.7467cm,y=5.0cm]{v3}
\Vertex[L=\hbox{},x=2.5359cm,y=0.0cm]{v4}
\Vertex[L=\hbox{},x=3.2912cm,y=1.9375cm]{v5}
\Vertex[L=\hbox{},x=5.0cm,y=2.7145cm]{v6}
\Edge[](v0)(v1)
\Edge[](v0)(v2)
\Edge[](v0)(v4)
\Edge[](v0)(v5)
\Edge[](v1)(v3)
\Edge[](v1)(v5)
\Edge[](v1)(v6)
\Edge[](v2)(v3)
\Edge[](v2)(v4)
\Edge[](v2)(v6)
\Edge[](v3)(v5)
\Edge[](v3)(v6)
\Edge[](v4)(v5)
\Edge[](v4)(v6)
\end{tikzpicture}}}
\qquad
\subfigure[$G_8$]{
\scalebox{\c}{
\begin{tikzpicture}
\GraphInit[vstyle=Classic]
\Vertex[L=\hbox{},x=4.0302cm,y=0.4387cm]{v0}
\Vertex[L=\hbox{},x=1.4967cm,y=2.8477cm]{v1}
\Vertex[L=\hbox{},x=3.5366cm,y=3.3444cm]{v2}
\Vertex[L=\hbox{},x=0.0cm,y=2.7474cm]{v3}
\Vertex[L=\hbox{},x=5.0cm,y=3.4962cm]{v4}
\Vertex[L=\hbox{},x=1.3457cm,y=0.0cm]{v5}
\Vertex[L=\hbox{},x=2.2522cm,y=5.0cm]{v6}
\Edge[](v0)(v2)
\Edge[](v0)(v4)
\Edge[](v0)(v5)
\Edge[](v1)(v2)
\Edge[](v1)(v3)
\Edge[](v1)(v5)
\Edge[](v1)(v6)
\Edge[](v2)(v4)
\Edge[](v2)(v6)
\Edge[](v3)(v5)
\Edge[](v3)(v6)
\Edge[](v4)(v6)
\end{tikzpicture}}}
\qquad
\subfigure[$G_9$]{
\scalebox{\c}{
\begin{tikzpicture}
\GraphInit[vstyle=Classic]
\Vertex[L=\hbox{},x=3.5345cm,y=0.0cm]{v0}
\Vertex[L=\hbox{},x=0.0cm,y=3.8918cm]{v1}
\Vertex[L=\hbox{},x=5.0cm,y=5.0cm]{v2}
\Vertex[L=\hbox{},x=1.4434cm,y=1.7413cm]{v3}
\Vertex[L=\hbox{},x=4.5534cm,y=2.4649cm]{v4}
\Vertex[L=\hbox{},x=2.7724cm,y=2.9206cm]{v5}
\Vertex[L=\hbox{},x=2.5682cm,y=4.7449cm]{v6}
\Edge[](v0)(v3)
\Edge[](v0)(v4)
\Edge[](v0)(v5)
\Edge[](v1)(v3)
\Edge[](v1)(v5)
\Edge[](v1)(v6)
\Edge[](v2)(v4)
\Edge[](v2)(v5)
\Edge[](v2)(v6)
\Edge[](v3)(v4)
\Edge[](v3)(v6)
\Edge[](v4)(v6)
\end{tikzpicture}}}
\qquad
\subfigure[$G_{10}$]{
\scalebox{\c}{
\begin{tikzpicture}
\GraphInit[vstyle=Classic]
\Vertex[L=\hbox{},x=0.4cm,y=3.5cm]{v0}
\Vertex[L=\hbox{},x=2.2404cm,y=5cm]{v1}
\Vertex[L=\hbox{},x=3.2cm,y=-0.25cm]{v2}
\Vertex[L=\hbox{},x=1.9519cm,y=2.5023cm]{v3}
\Vertex[L=\hbox{},x=0.0cm,y=1.7474cm]{v4}
\Vertex[L=\hbox{},x=5.0cm,y=1.6249cm]{v5}
\Vertex[L=\hbox{},x=3.6901cm,y=5.0cm]{v6}
\Vertex[L=\hbox{},x=4.8cm,y=3.018cm]{v7}
\Vertex[L=\hbox{},x=3.1303cm,y=1.223cm]{v8}
\Vertex[L=\hbox{},x=1.0528cm,y=0.3089cm]{v9}
\Edge[](v0)(v1)
\Edge[](v0)(v2)
\Edge[](v0)(v3)
\Edge[](v0)(v6)
\Edge[](v0)(v9)
\Edge[](v1)(v4)
\Edge[](v1)(v5)
\Edge[](v1)(v6)
\Edge[](v1)(v7)
\Edge[](v2)(v4)
\Edge[](v2)(v5)
\Edge[](v2)(v7)
\Edge[](v2)(v8)
\Edge[](v3)(v4)
\Edge[](v3)(v5)
\Edge[](v3)(v6)
\Edge[](v3)(v9)
\Edge[](v4)(v8)
\Edge[](v4)(v9)
\Edge[](v5)(v7)
\Edge[](v5)(v8)
\Edge[](v6)(v7)
\Edge[](v6)(v8)
\Edge[](v7)(v9)
\Edge[](v8)(v9)
\end{tikzpicture}}}
\qquad
\subfigure[$G_{11}$]{
\scalebox{\c}{
\begin{tikzpicture}
\GraphInit[vstyle=Classic]
\Vertex[L=\hbox{},x=4cm,y=8cm]{v0}
\Vertex[L=\hbox{},x=6.1cm,y=7.4044cm]{v12}
\Vertex[L=\hbox{},x=7.5cm,y=5.9365cm]{v11}
\Vertex[L=\hbox{},x=8cm,y=4cm]{v10}
\Vertex[L=\hbox{},x=7.5cm,y=2.0635cm]{v9}
\Vertex[L=\hbox{},x=6.1cm,y=0.5956cm]{v8}
\Vertex[L=\hbox{},x=4cm,y=0cm]{v7}
\Vertex[L=\hbox{},x=1.9cm,y=0.5956cm]{v6}
\Vertex[L=\hbox{},x=0.5cm,y=2.0635cm]{v5}
\Vertex[L=\hbox{},x=0.1cm,y=3.1112cm]{v4}
\Vertex[L=\hbox{},x=0.1cm,y=4.8888cm]{v3}
\Vertex[L=\hbox{},x=0.5cm,y=5.9365cm]{v2}
\Vertex[L=\hbox{},x=1.9cm,y=7.4044cm]{v1}
\Edge[](v0)(v4)
\Edge[](v0)(v6)
\Edge[](v0)(v7)
\Edge[](v0)(v8)
\Edge[](v0)(v10)
\Edge[](v0)(v11)
\Edge[](v1)(v4)
\Edge[](v1)(v5)
\Edge[](v1)(v6)
\Edge[](v1)(v7)
\Edge[](v1)(v8)
\Edge[](v1)(v11)
\Edge[](v2)(v6)
\Edge[](v2)(v7)
\Edge[](v2)(v8)
\Edge[](v2)(v10)
\Edge[](v2)(v11)
\Edge[](v2)(v12)
\Edge[](v3)(v7)
\Edge[](v3)(v8)
\Edge[](v3)(v9)
\Edge[](v3)(v10)
\Edge[](v3)(v11)
\Edge[](v3)(v12)
\Edge[](v4)(v8)
\Edge[](v4)(v9)
\Edge[](v4)(v10)
\Edge[](v4)(v12)
\Edge[](v5)(v8)
\Edge[](v5)(v9)
\Edge[](v5)(v10)
\Edge[](v5)(v11)
\Edge[](v5)(v12)
\Edge[](v6)(v9)
\Edge[](v6)(v10)
\Edge[](v6)(v12)
\Edge[](v7)(v9)
\Edge[](v7)(v12)
\Edge[](v9)(v11)
\end{tikzpicture}}}
\caption{$4$-vertex-critical graphs.}%
\label{fig:4critP22P1free}%
\end{figure}


\section{Proof of Theorem \ref{thm:dichotomy}}\label{sec:mainproof}

In this section we prove Theorem~\ref{thm:dichotomy}.   Let $k \geq 5$.   
\squishlisttwo
\item From Fact~\ref{fact:P4freeperfect}, there is exactly one $k$-vertex-critical $P_4$-free graph.
\item From Fact~\ref{fact:stable}, there is a finite number of $k$-vertex-critical $\overline{K_4}$-free graphs. 
\item From Fact~\ref{fact:P3P1}, there is a finite number of $k$-vertex-critical $(P_3+P_1)$-free graphs. 
\item From Fact~\ref{fact:infif2K2}, there is an infinite number of $k$-vertex-critical $2K_2$-free graphs.  
\item From Fact~\ref{fact:infifclaw}, there is an infinite number of $k$-vertex-critical claw-free graphs.
\item From Fact~\ref{fact:infifcycle},  there is an infinite number of $k$-vertex-critical $H$-free graphs if $H$ is either $K_4$, diamond, paw, $C_4$,  or $K_3+P_1$.  
\squishend
Thus, to complete the proof, we need only consider the case when $H$ is  $P_2 + 2P_1$.   This case is handled by the following more general theorem that we prove in the following subsection.


\begin{theorem}\label{thm:finitelymanykcritP22P1free}
For all $k\ge 1$ and $\ell\ge 1$, there is a finite number of $k$-vertex-critical $(P_2+\ell P_1)$-free graphs.
\end{theorem}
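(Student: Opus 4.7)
The plan is to prove Theorem~\ref{thm:finitelymanykcritP22P1free} by induction on $\ell$.

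For the base case $\ell=1$: a $(P_2+P_1)$-free graph is the complement of a $P_3$-free graph and therefore a complete multipartite graph. The unique $k$-vertex-critical complete multipartite graph is $K_k$, since deleting a vertex from a part of size $\geq 2$ leaves the chromatic number unchanged; hence there is exactly one $k$-vertex-critical $(P_2+P_1)$-free graph.

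For the inductive step, assume the statement for $\ell-1$ and every $k$, and let $G$ be a $k$-vertex-critical $(P_2+\ell P_1)$-free graph, so $\omega(G)\leq\chi(G)=k$. If $G$ is also $(P_2+(\ell-1)P_1)$-free, the inductive hypothesis bounds $|V(G)|$ directly. Otherwise, fix an induced copy of $P_2+(\ell-1)P_1$ in $G$ consisting of an edge $uv$ and an independent set $X=\{w_1,\ldots,w_{\ell-1}\}$ anti-complete to $\{u,v\}$. The structural engine is a \emph{dominating edge} observation: for any edge $ab$ with both endpoints in $\overline{N}(X):=V(G)\setminus(X\cup N(X))$, every vertex $c\in\overline{N}(X)\setminus\{a,b\}$ must be adjacent to $a$ or $b$, for otherwise $\{a,b,c\}\cup X$ would induce $P_2+\ell P_1$ in $G$. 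Applied to $uv$ this gives $\overline{N}(X)\subseteq\{u,v\}\cup N(u)\cup N(v)$, and so
\[
V(G)\subseteq X\cup N(X)\cup N[u]\cup N[v].
\]

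Ramsey's theorem then handles the part of $V(G)$ far from the edge $uv$: the non-neighborhood $N^*(u,v):=V(G)\setminus(N[u]\cup N[v])$ has $\omega\leq k$ and $\alpha\leq\ell-1$ (the latter from $(P_2+\ell P_1)$-freeness applied to the edge $uv$), so $|N^*(u,v)|<R(k+1,\ell)$.

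The main obstacle, and what remains, is bounding $|N(u)|$, $|N(v)|$, and $|N(w_i)|$ for each $i$. For any vertex $z$, $G[N(z)]$ is $(P_2+\ell P_1)$-free with $\omega\leq k-1$ and $\chi\leq k-1$ (since $G-z$ is $(k-1)$-colorable); a chromatic bound alone does not suffice (cf.\ $K_{n,n}$), so the $k$-vertex-criticality of $G$ must be leveraged. My plan for this step is to bound $\alpha(G[N(z)])$ by a function of $k$ and $\ell$ -- either by iterating the dominating-edge argument inside $G[N(z)]$ against an induced $P_2+(\ell-1)P_1$ sitting in $N(z)$, or by appealing to the inductive hypothesis on a critical subgraph of $G[N(z)]$ -- and then invoke Ramsey again to conclude $|N(z)|<R(k,\alpha(G[N(z)])+1)$. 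Combining these estimates with the Ramsey bound on $|N^*(u,v)|$ yields a finite bound $|V(G)|\leq f(k,\ell)$, completing the induction.
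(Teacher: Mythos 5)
Your base case is correct, and both structural observations in the inductive step check out: the dominating-edge argument is valid, and $N^*(u,v)=V(G)\setminus(N[u]\cup N[v])$ indeed satisfies $\omega\le k$, $\alpha\le\ell-1$, hence $|N^*(u,v)|<R(k+1,\ell)$. But notice that this last bound holds for \emph{any} edge $uv$ of $G$ and already gives the trivial decomposition $V(G)=N[u]\cup N[v]\cup N^*(u,v)$; the copy of $P_2+(\ell-1)P_1$, the set $X$, and the containment $V(G)\subseteq X\cup N(X)\cup N[u]\cup N[v]$ therefore buy nothing (indeed they leave you with \emph{more} neighbourhoods to bound). The entire difficulty of the theorem has thus been pushed into the step you explicitly leave open, and neither of your two suggestions for closing it works as stated. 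Your induction is on $\ell$, so the hypothesis speaks only about $(P_2+(\ell-1)P_1)$-free graphs; $G[N(z)]$ is $(P_2+\ell P_1)$-free but there is no reason for it to be $(P_2+(\ell-1)P_1)$-free, and even if you locate a vertex-critical induced subgraph of $G[N(z)]$ and bound \emph{its} order, that says nothing about $|N(z)|$ or about $\alpha(G[N(z)])$. Iterating the dominating-edge argument inside $N(z)$ decreases neither $k$ nor $\ell$, so it gives no terminating recursion. The obstruction is real: $C_5\vee\overline{K_n}$ is $(P_2+2P_1)$-free with $\omega=4$ and $\alpha=n$ arbitrarily large, so bounding $\alpha$ genuinely requires exploiting criticality, and your sketch identifies this need without supplying a mechanism for it.

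The paper's proof supplies exactly that mechanism, and it is instructive to see why the induction there is run on $k$ (for fixed $\ell$) rather than on $\ell$. If $\alpha(G)\le\ell$, Ramsey finishes immediately. Otherwise take a maximum independent set $S$; criticality forces $\chi(G-S)=k-1$, so $G-S$ contains a $(k-1)$-vertex-critical induced subgraph $G'$ whose order is bounded by the induction hypothesis in $k$ for the \emph{same} $\ell$. Lemma~\ref{lem:nonneighbors} shows each vertex of $G-S$ has at most $\ell-1$ non-neighbours in $S$, so if $|S|>(\ell-1)\,|V(G')|$ the pigeonhole principle produces $s\in S$ complete to $V(G')$, and then the subgraph induced by $V(G')\cup\{s\}$ is a proper induced subgraph of $G$ with chromatic number $k$, contradicting criticality. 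Hence $\alpha(G)=|S|\le(\ell-1)f(k-1,P_2+\ell P_1)$ and Ramsey finishes. To salvage your outline you would need to import an argument of this kind to bound $\alpha(G)$; as written, the proposal is a correct reduction of the theorem to an unsolved sub-problem rather than a proof.
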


\noindent
Based on the above theorem, there is a finite number of $k$-vertex-critical $(P_2+ 2P_1)$-free graphs. This completes the proof of Theorem~\ref{thm:dichotomy}.


\subsection{$(P_2+\ell P_1)$-free graphs}\label{sec:P2ellP1}

If there is a finite number of $k$-vertex-critical $H$-free graphs for a given $k$ and $H$, let $f(k,H)$ denote the maximal number of vertices in such a graph; otherwise, let $f(k,H) = \infty$.  Before proving 
Theorem~\ref{thm:finitelymanykcritP22P1free}, we prove two preliminary results.

\begin{lemma}\label{lem:nonneighbors}
If $G$ is a $(P_2+\ell P_1)$-free graph, $S\subseteq V(G)$ a maximal independent set, and $|S|\ge \ell+1$, then every vertex in $G-S$ has at most $\ell-1$ nonneighbors in $S$.
\end{lemma}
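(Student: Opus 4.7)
The plan is to argue by contradiction and construct an induced $P_2 + \ell P_1$ directly from the hypothesized configuration. Suppose for the sake of contradiction that some vertex $v\in V(G)\setminus S$ has at least $\ell$ nonneighbors in $S$. My first move is to invoke the maximality of $S$: since $S\cup\{v\}$ is not independent, $v$ must have at least one neighbor $u\in S$. This neighbor $u$ is automatically distinct from any nonneighbor of $v$.

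Next, I would pick $\ell$ nonneighbors $w_1,\dots,w_\ell$ of $v$ in $S$ (these exist by assumption, and the hypothesis $|S|\ge \ell+1$ ensures this is consistent with also having the neighbor $u\in S$, so the picture is not vacuous). I then claim $\{u,v,w_1,\dots,w_\ell\}$ induces a $P_2+\ell P_1$ in $G$. Verifying this amounts to a short case check: the edge $uv$ is present by choice of $u$; each $w_i$ is non-adjacent to $v$ by choice; each $w_i$ is non-adjacent to $u$ because both lie in the independent set $S$; and the $w_i$ are pairwise non-adjacent for the same reason. The $\ell+2$ vertices are pairwise distinct because $v\notin S$ while $u,w_1,\dots,w_\ell\in S$, and $u\ne w_i$ since $u$ is adjacent to $v$ whereas the $w_i$ are not.

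This contradicts the assumption that $G$ is $(P_2+\ell P_1)$-free and completes the proof. There is no real obstacle here; the only thing to be careful about is recording the reason for each non-edge (independence of $S$ versus being a nonneighbor of $v$) and confirming distinctness of the $\ell+2$ chosen vertices, which is where the hypothesis $|S|\ge \ell+1$ plays its role.
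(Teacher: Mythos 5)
Your argument is correct and is essentially identical to the paper's proof: invoke maximality of $S$ to obtain a neighbor of the offending vertex in $S$, then observe that this neighbor, the vertex itself, and $\ell$ of its nonneighbors in $S$ induce a $P_2+\ell P_1$. The extra bookkeeping you include about distinctness and the reasons for each non-edge is fine but not a departure from the paper's route.
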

\begin{proof}
Consider a vertex $x \in G-S$. Since $S$ is maximal, $x$ has a neighbor in $S$, say $y$. Suppose $x\in G-S$ has $\ell$ nonneighbors in $S$, say $s_1,s_2,\ldots, s_{\ell}$.  Now, $\{x,y,s_1,s_2,\ldots, s_{\ell}\}$ induces a $P_2+\ell P_1$ in $G$, a contradiction. 
\end{proof}

\begin{lemma}\label{lem:3crit}
$f(3,P_2+\ell P_1)=2\ell+1$ for all $\ell\ge 0$.
\end{lemma}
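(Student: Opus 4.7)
The plan is to invoke Fact~\ref{rem:3critical}, which says that the $3$-vertex-critical graphs are exactly the odd cycles $C_{2n+1}$ with $n\ge 1$. So computing $f(3,P_2+\ell P_1)$ reduces to finding the largest odd $m$ such that $C_m$ is $(P_2+\ell P_1)$-free. Since $C_{2\ell+1}$ has $2\ell+1$ vertices, it would suffice to prove the two matching bounds: (i) $C_{2\ell+1}$ is $(P_2+\ell P_1)$-free, and (ii) every longer odd cycle $C_{2n+1}$ with $n\ge \ell+1$ contains an induced $P_2+\ell P_1$.

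For the upper bound (ii), I would exhibit an explicit induced copy of $P_2+\ell P_1$ in $C_{2n+1}$ when $n\ge \ell+1$. Labeling the vertices $v_0,v_1,\ldots,v_{2n}$ cyclically, I would take the edge $v_0v_1$ together with the vertices $v_3,v_5,\ldots,v_{2\ell+1}$. These $\ell$ extra vertices are pairwise at cyclic distance at least $2$, and the largest index $2\ell+1$ satisfies $2\ell+1\le 2n-1$, so each is also at cyclic distance at least $2$ from both $v_0$ and $v_1$. Hence the chosen set induces exactly a $P_2+\ell P_1$.

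For the lower bound (i), I would argue that any induced $P_2+\ell P_1$ in $C_{2\ell+1}$ would consist of an edge together with $\ell$ pairwise nonadjacent vertices, each nonadjacent to both endpoints of the edge. By vertex-transitivity I may assume the edge is $v_0v_1$; the vertices neither in nor adjacent to this edge form the induced path $v_3,v_4,\ldots,v_{2\ell-1}$ on $2\ell-3$ vertices, whose independence number is $\lceil(2\ell-3)/2\rceil=\ell-1<\ell$. So no valid choice of $\ell$ nonneighbors exists, and $C_{2\ell+1}$ is $(P_2+\ell P_1)$-free.

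The proof is essentially a one-line application of Fact~\ref{rem:3critical} plus two short index computations, so there is no real obstacle; the only thing to be careful about is the degenerate small cases. For $\ell=1$ the path $v_3,\ldots,v_{2\ell-1}$ is empty, but then $C_3=K_3$ is trivially $(P_2+P_1)$-free while $C_5$ contains the induced $P_2+P_1$ on $\{v_0,v_1,v_3\}$, so $f(3,P_2+P_1)=3=2\ell+1$ as claimed. For $\ell=0$ the statement reduces to the trivial observation that no odd cycle is edgeless, which I would dispose of in a single sentence (or assume $\ell\ge 1$ as the rest of the paper does).
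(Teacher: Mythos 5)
Your proof is correct and follows essentially the same route as the paper: reduce to odd cycles via Fact~\ref{rem:3critical} and determine exactly which $C_{2m+1}$ are $(P_2+\ell P_1)$-free. You additionally supply the explicit index computations behind the claim that $C_{2m+1}$ is $(P_2+\ell P_1)$-free if and only if $m\le \ell$, which the paper asserts without detail, and you correctly flag the degenerate $\ell=0$ case that the paper glosses over.
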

\begin{proof}
Let $G$ be a $3$-vertex-critical $(P_2+\ell P_1)$-free graph. 
From Fact~\ref{rem:3critical}, $G=C_{2m+1}$ for some $m\ge 2$. Moreover, $C_{2m+1}$ is $(P_2+\ell P_1)$-free if and only if $m\le \ell$. Therefore, $G$ belongs to the set $\{C_{2m+1}:m=1,2,\ldots,\ell\}$. Hence, $f(3,P_2+\ell P_1)=2\ell+1$.
\end{proof}


\begin{proof}[Proof of Theorem \ref{thm:finitelymanykcritP22P1free}]
The proof is by induction of $k$. It is clear that $f(1,P_2+\ell P_1)=1$, $f(2,P_2+\ell P_1)=2$ and the case $k=3$ was shown in Lemma~\ref{lem:3crit}. Now let $G$ be a $k$-vertex-critical $(P_2+\ell P_1)$-free graph for some $k\ge 4$. Note that since $G$ is $k$-vertex-critical, $\omega(G)\le k$. If $\alpha(G)\le \ell$, then $|V(G)|< R(k+1,\ell+1)$, which is finite by Ramsey's Theorem \cite{Ramsey}. Therefore we can suppose that $\alpha(G)\ge \ell+1$. Let $S$ be a maximum independent set of $G$.  If $G-S$ is $(k-2)$-colorable, then we can extend this coloring to a $(k-1)$-coloring of $G$ by coloring all vertices in $S$ the same color, a contradiction. Therefore, $\chi(G-S)=k-1$. Hence, $G-S$ must have an induced $(k-1)$-vertex-critical subgraph, say $G'$. Therefore, $|V(G')|\le f(k-1,P_2+\ell P_1)$ by the induction hypothesis. Now if $|S|>(\ell-1)\cdot f(k-1,P_2+\ell P_1)$, then, by Lemma~\ref{lem:nonneighbors} and the Pigeonhole Principle, there is some vertex $s\in S$ that is adjacent to all vertices in $G'$. Therefore, the graph induced by $V(G')\cup \{s\}$ is a proper subgraph of $G$ and is $k$-vertex-critical, a contradiction. Therefore, $|S|\le(\ell-1)\cdot f(k-1,P_2+\ell P_1)$, and $|V(G)|<R(k+1,(\ell-1)f(k-1,P_2+\ell P_1)+1)$, which is finite, again by Ramsey's Theorem.
\end{proof}


\section{$(P_3+P_1)$-free graphs}\label{sec:copaw}

In this section we prove  the following theorem which improves on Fact~\ref{fact:P3P1} given in~\cite{Cameron2020}.  In particular, we demonstrate that any 
$k$-vertex-critical $(P_3+P_1)$-free graph has at most $2k-1$ vertices.  A graph obtaining this bound is $\overline{C_{2k-1}}$.

\begin{theorem}\label{thm:finitekcritP3P1free}
Let $k \geq 1$. If $G$ is a $k$-vertex-critical $(P_3+P_1)$-free graph, then $\alpha(G)\le 2$. Moreover, the maximum number of vertices in a $k$-vertex-critical $(P_3+P_1)$-free graph is $2k-1$.
\end{theorem}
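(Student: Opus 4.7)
The plan is to first establish $\alpha(G)\le 2$ for any $k$-vertex-critical $(P_3+P_1)$-free graph $G$, and then derive the vertex count bound $|V(G)|\le 2k-1$ essentially as a corollary, with the extremal example $\overline{C_{2k-1}}$ witnessing tightness.

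For the independence bound, I would pass to the complement: $G$ is $(P_3+P_1)$-free if and only if $\overline{G}$ is paw-free, and then invoke Olariu's classical dichotomy that every connected paw-free graph is triangle-free or complete multipartite. Concretely, decompose $G$ along its co-components by writing $V(G)=C_1\sqcup\cdots\sqcup C_s$ where the $C_i$ are the vertex sets of the connected components of $\overline{G}$, so that $G = G_1\vee G_2\vee \cdots\vee G_s$ with $G_i := G[C_i]$. Each $\overline{G_i}$ is connected and paw-free, so it is either triangle-free (yielding $\alpha(G_i)\le 2$) or complete multipartite (meaning $G_i$ is a disjoint union of cliques). Because $\chi$ is additive under join and $G$ is vertex-critical, each $G_i$ is itself $\chi(G_i)$-vertex-critical.

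Now suppose for contradiction that some $G_i$ has $\alpha(G_i)\ge 3$. The corresponding independent triple is a triangle in $\overline{G_i}$, so $\overline{G_i}$ must be complete multipartite, making $G_i$ a disjoint union of cliques. I would then rule this out on criticality grounds: if $G_i$ consists of at least two cliques, some vertex has removal that leaves the maximum clique size unchanged---any vertex in a non-maximum clique, or any vertex at all when several cliques tie for the maximum---contradicting vertex-criticality; and if $G_i$ is a single clique of size at least two, then $\overline{G_i}$ is edgeless and disconnected, contradicting that $G_i$ corresponds to a single co-component. Since $|V(G_i)|=1$ is incompatible with $\alpha(G_i)\ge 3$, we get a contradiction, so $\alpha(G_i)\le 2$ for every $i$ and therefore $\alpha(G) = \max_i \alpha(G_i)\le 2$.

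The vertex bound follows quickly from $\alpha(G)\le 2$: every color class of a proper $k$-coloring has at most two vertices, so $|V(G)|\le 2k$. If equality held, then for any $v$ the graph $G-v$ would have $2k-1$ vertices with independence number at most $2$, and thus chromatic number at least $\lceil (2k-1)/2\rceil = k$, contradicting $\chi(G-v)\le k-1$. So $|V(G)|\le 2k-1$. For tightness I would verify directly that $\overline{C_{2k-1}}$ has $\alpha=2$ and chromatic number $k$ (the minimum clique cover of $C_{2k-1}$ has size $k$), is $k$-vertex-critical (since $\overline{C_{2k-1}}-v \cong \overline{P_{2k-2}}$ has chromatic number $k-1$), and is $(P_3+P_1)$-free (an isolated fourth vertex relative to some $P_3$ in $\overline{C_{2k-1}}$ would have to be adjacent in $C_{2k-1}$ to three pairwise distinct vertices, impossible in a $2$-regular graph).

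The main conceptual step, and where any subtlety resides, is the co-component reduction paired with Olariu's dichotomy; the additivity of $\chi$ under join, the failure of disjoint unions of two or more cliques to be vertex-critical, and the verification for $\overline{C_{2k-1}}$ are all routine.
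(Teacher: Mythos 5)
Your proof is correct and follows essentially the same route as the paper: both pass to the complement and invoke Olariu's paw-free structure theorem, use that the factors of a join of a vertex-critical graph are themselves vertex-critical, and finish with the counting argument $|V(G)|\le \alpha(G)\cdot\chi(G)\le 2k$ sharpened to $2k-1$, with $\overline{C_{2k-1}}$ witnessing tightness. The only differences are organizational: you carry out the full co-component decomposition in one step and eliminate the disjoint-union-of-cliques alternative by a direct criticality argument, whereas the paper inducts on a binary join decomposition, and the paper additionally cites Theorem~\ref{thm:critconncomp} to pin the order of a non-join critical example at exactly $2k-1$, which is not needed for the stated upper bound.
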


\noindent
To prove this theorem, we apply the following known results.

\begin{theorem}[\cite{Olariu1988}]
A graph $G$ is $paw$-free if and only if every component of $G$ is triangle-free or complete multipartite.
\end{theorem}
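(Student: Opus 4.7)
My plan is to split the theorem into three pieces: the structural bound $\alpha(G)\le 2$, the cardinality bound $|V(G)|\le 2k-1$, and tightness. The crux is the first piece, and I would attack it by complementation. Since the paw and $P_3+P_1$ are complementary graphs, $G$ is $(P_3+P_1)$-free if and only if $\overline{G}$ is paw-free. Applying the Olariu structure theorem stated above, every component of $\overline{G}$ is either triangle-free or complete multipartite.

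I would then pass to the join decomposition $G = H_1 \vee H_2 \vee \cdots \vee H_m$, where the $\overline{H_i}$'s are precisely the components of $\overline{G}$. In the triangle-free case, $\alpha(H_i) = \omega(\overline{H_i}) \le 2$ immediately. In the complete multipartite case, $H_i$ is a disjoint union of cliques. The leverage comes from vertex-criticality: since $\chi(G-v) = \chi(H_i-v) + \sum_{j\ne i}\chi(H_j)$ for any $v \in V(H_i)$, the $k$-vertex-criticality of $G$ forces each $H_i$ to be $\chi(H_i)$-vertex-critical. Any vertex-critical graph is connected, and a connected disjoint union of cliques must be a single clique $K_n$, giving $\alpha(H_i)=1$. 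Since $\alpha(G) = \max_i \alpha(H_i)$ for a join, I conclude $\alpha(G)\le 2$.

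For the cardinality bound, $\alpha(G)\le 2$ means every color class in a proper coloring has at most two vertices, so $|V(G)| \le 2\chi(G) = 2k$. If $|V(G)| = 2k$, then for every $v$ the subgraph $G-v$ still satisfies $\alpha(G-v)\le 2$ and $|V(G-v)|=2k-1$, so $\chi(G-v) \ge \lceil (2k-1)/2\rceil = k$, contradicting vertex-criticality. Hence $|V(G)|\le 2k-1$. For tightness when $k\ge 3$, I would exhibit $\overline{C_{2k-1}}$: the odd cycle $C_{2k-1}$ is triangle-free (hence paw-free), so $\overline{C_{2k-1}}$ is $(P_3+P_1)$-free, and a short matching argument using the perfect matching of $P_{2k-2}$ gives $\chi(\overline{C_{2k-1}}) = k$ with $\chi(\overline{C_{2k-1}} - v) = k-1$ for every vertex $v$.

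The main obstacle will be cleanly executing the structural reduction for $\alpha(G)\le 2$ — specifically, justifying that vertex-criticality is inherited by each factor of the join decomposition, and correctly handling the complete multipartite branch of Olariu's dichotomy via the fact that vertex-critical graphs are connected. Once that reduction is in place, the counting step and the verification that $\overline{C_{2k-1}}$ is extremal are routine.
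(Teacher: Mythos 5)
Your proposal does not prove the statement in question. The statement you were asked to prove is Olariu's structure theorem: a graph $G$ is paw-free if and only if every component of $G$ is triangle-free or complete multipartite. What you have written instead is a proof of the paper's Theorem~\ref{thm:finitekcritP3P1free} (that $k$-vertex-critical $(P_3+P_1)$-free graphs have $\alpha(G)\le 2$ and at most $2k-1$ vertices), and your argument explicitly \emph{invokes} Olariu's theorem as an ingredient (``Applying the Olariu structure theorem stated above\ldots''). As a proof of the target statement this is circular: you assume the very claim you are supposed to establish. (In the paper, Olariu's theorem is quoted from \cite{Olariu1988} without proof and then used to derive Corollary~\ref{cor:copawfreestructure}; your text essentially reproduces that downstream derivation rather than supplying the missing proof.)

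What a proof of the actual statement requires is roughly the following. For the backward direction, a paw contains a triangle, so a triangle-free component contains no paw; and in a complete multipartite graph no vertex can be adjacent to exactly one vertex of a triangle, since being nonadjacent to two mutually adjacent vertices would place it in two distinct parts simultaneously. For the forward direction, one takes a component $C$ that contains a triangle and shows that paw-freeness forces nonadjacency within $C$ to be transitive (an equivalence relation), whence $C$ is complete multipartite; this is the nontrivial step and is entirely absent from your write-up. Separately, your argument for Theorem~\ref{thm:finitekcritP3P1free} is broadly in line with the paper's (join decomposition, criticality inherited by join factors via Lemma~\ref{lem:critjoin}, the counting bound, and $\overline{C_{2k-1}}$ as the extremal example), but that is not the theorem under review here.
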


\noindent
Since $paw=\overline{P_3+P_1}$, we get the following corollary immediately.

\begin{corollary}\label{cor:copawfreestructure}
A graph $G$ is $(P_3+P_1)$-free if and only if $G=H_1\vee H_2\vee \cdots \vee H_n$ where $\alpha(H_i)\le 2$ or $H_i$ is the disjoint union of cliques for all $i=1,2,\ldots, n$.
\end{corollary}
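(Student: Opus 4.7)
The plan is to derive the corollary by taking complements in Olariu's characterization of paw-free graphs, using the fact that $paw = \overline{P_3 + P_1}$. Specifically, $G$ is $(P_3+P_1)$-free if and only if $\overline{G}$ is $paw$-free, so I can apply Olariu's theorem to $\overline{G}$ and then translate the resulting structure back to $G$ by complementing.

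First I would let $\overline{G}$ denote the complement of $G$ and write $\overline{G} = C_1 + C_2 + \cdots + C_n$, where $C_1, \ldots, C_n$ are the connected components of $\overline{G}$. Taking complements of both sides gives $G = \overline{C_1} \vee \overline{C_2} \vee \cdots \vee \overline{C_n}$, since the complement of a disjoint union is the join of the complements. Setting $H_i := \overline{C_i}$, this expresses $G$ as a join in exactly the desired form.

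Next I would translate the two cases from Olariu's theorem. Assuming $\overline{G}$ is paw-free, each $C_i$ is either triangle-free or complete multipartite. In the first case, $C_i$ contains no $K_3$, which means $\overline{C_i} = H_i$ contains no independent set of size three, i.e., $\alpha(H_i) \le 2$. In the second case, $C_i$ is complete multipartite, so its complement $H_i$ is precisely the disjoint union of the cliques obtained from its parts. This establishes the forward direction.

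For the converse, if $G = H_1 \vee \cdots \vee H_n$ with each $H_i$ satisfying one of the two conditions, then $\overline{G} = \overline{H_1} + \cdots + \overline{H_n}$, and each component of $\overline{G}$ sits inside some $\overline{H_i}$; more precisely, the components of $\overline{G}$ are exactly the components of the various $\overline{H_i}$. By the same translation as above, each $\overline{H_i}$ is triangle-free or complete multipartite, so each component of $\overline{G}$ is as well, and Olariu's theorem gives that $\overline{G}$ is paw-free, hence $G$ is $(P_3+P_1)$-free. The only mild subtlety, and essentially the only obstacle, is being careful that the decomposition of $\overline{G}$ into components matches the join decomposition of $G$, and that the conditions "triangle-free" and "complete multipartite" complement cleanly to "$\alpha \le 2$" and "disjoint union of cliques" respectively; both are immediate from the definitions once stated explicitly.
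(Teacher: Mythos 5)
Your proof is correct and follows exactly the route the paper intends: the paper derives the corollary ``immediately'' from Olariu's theorem via the observation that $paw=\overline{P_3+P_1}$, and your argument simply spells out the complementation (disjoint union of components $\leftrightarrow$ join, triangle-free $\leftrightarrow$ $\alpha\le 2$, complete multipartite $\leftrightarrow$ disjoint union of cliques) that the paper leaves implicit.
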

We will need the following two results.

\begin{lemma}[\cite{Dhaliwal2017}]\label{lem:critjoin}
A graph $G\vee H$ with $|V(G)|,|V(H)|\ge 1$ is $k$-vertex-critical if and only if $G$ is $k_1$-vertex-critical and $H$ is  $k_2$-vertex-critical such that $k_1+k_2=k$.
\end{lemma}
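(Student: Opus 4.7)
The plan is to base the whole argument on the identity $\chi(G\vee H)=\chi(G)+\chi(H)$, which I will establish first and then invoke for both directions. The $\le$ bound is immediate by coloring $G$ and $H$ independently with disjoint palettes of sizes $\chi(G)$ and $\chi(H)$, respectively, and observing that adjacency between every pair in $V(G)\times V(H)$ causes no conflict. For the $\ge$ bound, in any proper coloring of $G\vee H$ the color classes used on $V(G)$ must be entirely disjoint from the color classes used on $V(H)$, since each vertex of $G$ is adjacent to each vertex of $H$; restricting such a coloring to $V(G)$ and $V(H)$ yields proper colorings whose palette sizes sum to at most the total, giving $\chi(G)+\chi(H)\le\chi(G\vee H)$.

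Once this identity is in hand, the forward direction proceeds as follows. Assume $G\vee H$ is $k$-vertex-critical and set $k_1=\chi(G)$, $k_2=\chi(H)$, so $k_1+k_2=k$ by the identity. If $G$ were not $k_1$-vertex-critical, there would exist $v\in V(G)$ with $\chi(G-v)=k_1$ (a non-critical graph admits a vertex whose removal preserves the chromatic number). Using the identity again, $\chi((G\vee H)-v)=\chi((G-v)\vee H)=k_1+k_2=k$, contradicting the $k$-vertex-criticality of $G\vee H$. The symmetric argument handles $H$.

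For the reverse direction, suppose $G$ is $k_1$-vertex-critical and $H$ is $k_2$-vertex-critical with $k_1+k_2=k$. The identity gives $\chi(G\vee H)=k$. For any $v\in V(G\vee H)$, without loss of generality $v\in V(G)$, so $(G\vee H)-v=(G-v)\vee H$, and applying the identity together with $\chi(G-v)<k_1$ yields $\chi((G\vee H)-v)<k_1+k_2=k$. Thus removing any vertex strictly decreases the chromatic number, so $G\vee H$ is $k$-vertex-critical.

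I do not anticipate a real obstacle here; the only delicate point is making the identity $\chi(G\vee H)=\chi(G)+\chi(H)$ crisp, in particular the argument that the palettes on $V(G)$ and $V(H)$ must be disjoint in any proper coloring, which relies essentially on the completeness of the join. Everything else reduces to short applications of this identity together with the definitional characterization of vertex-criticality.
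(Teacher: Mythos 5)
Your proof is correct. Note that the paper does not prove this lemma at all --- it is imported from the cited reference \cite{Dhaliwal2017} --- so there is no in-paper argument to compare against; your route through the identity $\chi(G\vee H)=\chi(G)+\chi(H)$ is the standard one, and each step checks out, including the key observation in the forward direction that a graph with $\chi(G)=k_1$ which fails to be $k_1$-vertex-critical must have a vertex $v$ with $\chi(G-v)=k_1$ (since deletion can never raise the chromatic number).
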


\begin{theorem}[\cite{Stehlik2003}]\label{thm:critconncomp}
If $G$ is $k$-vertex-critical and $\overline{G}$ is connected, then $G-v$ has a $(k-1)$-coloring in which every color class contains at least two vertices, for all $v \in V(G)$. 
\end{theorem}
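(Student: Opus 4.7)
The plan is to prove the two assertions separately, inducting on $k$ in both, and relying on the structural corollary together with Stehlík's theorem.

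For the independence bound $\alpha(G)\le 2$, I would induct on $k$, using Corollary \ref{cor:copawfreestructure} to write $G=H_1\vee\cdots\vee H_n$ with each $H_i$ either satisfying $\alpha(H_i)\le 2$ or being a disjoint union of cliques. The base $k=1$ is trivial ($G=K_1$). For $k\ge 2$, I split on $n$. If $n\ge 2$, then $G=G_1\vee G_2$ nontrivially; Lemma \ref{lem:critjoin} makes each $G_i$ a smaller $k_i$-vertex-critical $(P_3+P_1)$-free graph, so the inductive hypothesis gives $\alpha(G_i)\le 2$, and since every independent set in a join lies entirely in one side we get $\alpha(G)=\max\{\alpha(G_1),\alpha(G_2)\}\le 2$. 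If $n=1$, either $\alpha(G)\le 2$ directly, or $G$ is a disjoint union of cliques; since a $k$-vertex-critical graph is connected, in the latter case $G=K_k$ and $\alpha(G)=1$.

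For the vertex bound $|V(G)|\le 2k-1$, I would again induct on $k$, with $k=1$ trivial. For $k\ge 2$, I case-split on whether $\overline{G}$ is connected. If $\overline{G}$ is disconnected, then $G$ admits a nontrivial join decomposition $G=G_1\vee G_2$, and Lemma \ref{lem:critjoin} plus the inductive hypothesis yield $|V(G)|=|V(G_1)|+|V(G_2)|\le(2k_1-1)+(2k_2-1)=2k-2$. If $\overline{G}$ is connected, I apply Theorem \ref{thm:critconncomp}: for any $v\in V(G)$, the graph $G-v$ admits a $(k-1)$-coloring in which every color class has at least two vertices. But by the first part $\alpha(G)\le 2$, so each of the $k-1$ color classes has size exactly $2$, giving $|V(G-v)|=2(k-1)$ and hence $|V(G)|=2k-1$.

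To see that $2k-1$ is actually attained for $k\ge 3$, I would verify that $\overline{C_{2k-1}}$ is $k$-vertex-critical and $(P_3+P_1)$-free. Because $C_{2k-1}$ is triangle-free it is paw-free, so $\overline{C_{2k-1}}$ is $(P_3+P_1)$-free; independent sets in $\overline{C_{2k-1}}$ correspond to cliques in $C_{2k-1}$ and hence have size at most two, forcing $\chi(\overline{C_{2k-1}})\ge\lceil(2k-1)/2\rceil=k$, and pairing consecutive vertices of $C_{2k-1}$ (with one leftover) exhibits a $k$-coloring. Removing any vertex leaves $\overline{P_{2k-2}}$, whose chromatic number is similarly $k-1$, so the graph is vertex-critical.

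The real content is the interplay in the second paragraph: Stehlík's theorem by itself only asserts that the $k-1$ color classes each have at least two vertices, so a priori it gives a lower bound $|V(G)|\ge 2k-1$ rather than an upper bound. The key observation, and the main conceptual obstacle, is that the independence bound from the first part precisely pins these color classes to size exactly two, converting Stehlík's lower bound on class sizes into a sharp upper bound on $|V(G)|$. Everything else is routine case-splitting driven by the join decomposition from Corollary \ref{cor:copawfreestructure}.
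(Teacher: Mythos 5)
There is a fundamental mismatch here: the statement you were asked to prove is Theorem~\ref{thm:critconncomp} itself --- that for \emph{any} $k$-vertex-critical graph $G$ with connected complement and any $v\in V(G)$, the graph $G-v$ admits a $(k-1)$-coloring in which every color class has at least two vertices. Your proposal never establishes this. What you have written is a proof of the downstream result, Theorem~\ref{thm:finitekcritP3P1free}, about $k$-vertex-critical $(P_3+P_1)$-free graphs, and in the middle of it you explicitly invoke Theorem~\ref{thm:critconncomp} as a black box (``I apply Theorem~\ref{thm:critconncomp}\dots''). As an argument for the stated theorem this is circular: the target appears as a hypothesis, not a conclusion. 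None of your three paragraphs engages with the actual content of the statement, which concerns arbitrary $k$-vertex-critical graphs --- no $(P_3+P_1)$-freeness, no bound on the independence number --- and is a nontrivial refinement of Gallai's theorem on low-order color-critical graphs; it cannot be recovered from the join decomposition of Corollary~\ref{cor:copawfreestructure}, which is specific to $(P_3+P_1)$-free graphs.

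For completeness: the paper itself offers no proof of Theorem~\ref{thm:critconncomp} either; it is imported from \cite{Stehlik2003}, so there is no internal argument to compare against. Your write-up does track the paper's proof of Theorem~\ref{thm:finitekcritP3P1free} fairly closely (induction through the join decomposition via Lemma~\ref{lem:critjoin}, the $\alpha(G)\le 2$ bound, the extremal example $\overline{C_{2k-1}}$), with one small variation --- you derive $|V(G)|=2k-1$ directly from Stehl\'{\i}k's theorem plus $\alpha(G)\le 2$, whereas the paper first proves $|V(G)|\le 2k$ from $|V(G)|\le \chi(G)\,\alpha(G)$ and then excludes $2k$ by pigeonhole --- but that is a different theorem from the one under review. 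To answer the actual question you would need to either reproduce Stehl\'{\i}k's argument or at least sketch why a $k$-vertex-critical graph with connected complement has no singleton color class in some $(k-1)$-coloring of $G-v$; nothing of that kind is present.
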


An immediate consequence of this theorem is that a $k$-vertex-critical graph that is not the result of a graph join has at least $2k-1$ vertices.


\begin{proof}[Proof of Theorem~\ref{thm:finitekcritP3P1free}]
If $G=F\vee H$ is a $k$-vertex-critical $(P_3+P_1)$-free graph, then by Lemma~\ref{lem:critjoin} and induction, $|V(F)|\le 2k_1-1$ and $|V(H)|\le 2k_2-1$ where $k_1+k_2=k$, $\alpha(F)\le 2$ and $\alpha(H)\le 2$. Therefore, $|V(G)|\le 2k-2$ and $\alpha(G)\le 2$. So the result follows by induction. Otherwise, from Corollary~\ref{cor:copawfreestructure} and the fact that $G$ is vertex-critical and therefore connected, it follows that $\alpha(G)=2$. Moreover, an elementary lower bound on $\chi(G)$ is $\frac{|V(G)|}{\alpha(G)}$ and since $\chi(G)=k$ and $\alpha(G)=2$, we have $|V(G)|\le 2k$. 
If $|V(G)|=2k$, then let $v\in V(G)$ and consider $G-v$. Since $|V(G-v)|=2k-1$, every $(k-1)$-coloring of $G-v$ will have at least three vertices colored with the same color. But this means $G-v$ and therefore $G$ has an independent set of order at least three, contradicting $\alpha(G)=2$. Therefore, $|V(G)|\le 2k-1$. Also, from Theorem~\ref{thm:critconncomp}, $|V(G)|\ge 2k-1$, so if such a $G$ exists it must have order equal to $2k-1$. Finally, $\overline{C_{2k-1}}$ is a $k$-vertex-critical $(P_3+P_1)$-free graph of order $2k-1$ for all $k\ge 3$. 
\end{proof}

Earlier, we demonstrated that there are eight $4$-vertex-critical $(P_3+P_1)$-free graphs.  For $k>4$, the  $k$-vertex-critical $(P_3+P_1)$-free graphs were not previously known.
However, using our results to restrict the structure and order of $k$-vertex-critical $(P_3+P_1)$-free graphs we were able to employ \texttt{geng} in \texttt{nauty} \cite{McKay2014} to generate all graphs with order at most $11$ and independence number two (complements of those that are triangle-free) and then run an exhaustive computer search on the resulting graphs to find the exact number of $k$-vertex-critical $(P_3+P_1)$-free graphs for $k=5$ and $k=6$. Our findings are summarized in Table~\ref{tab:5and6crit} and the edge sets of all such graphs for $k=5$ are available in the Appendix.

\begin{table}[!h]
\begin{center}  \small
\renewcommand\arraystretch{1.2}
\begin{tabular}{|r|r|r|r|} 
\hline
$n$   &  $4$-vertex-critical  & $5$-vertex-critical & $6$-vertex-critical  \\ \hline
4   & $1$  & $0$			&  $0$  \\ \hline
5   & $0$  & $1$   				&  $0$  \\ \hline
6   & $1$  & $0$   				&  $1$   \\ \hline
7   & $6$  & $1$   				&  $0$    \\ \hline
8   & $0$  & $6$   				&  $1$    \\ \hline
9   & $0$  & $170$ 				&  $6$      \\ \hline
10  & $0$  & $0$   				&  $171$        \\ \hline
11  & $0$  & $0$   				&  $17828$   \\ \hline \hline
total &  $8$  & $178$ 			&  $18007$          \\ \hline
\end{tabular}
\caption{The number of $k$-vertex-critical $(P_3+P_1)$-free graphs of each order $n$ for $k=4,5,6$.}\label{tab:5and6crit}
\end{center}
\end{table}


As can be seen in Table~\ref{tab:5and6crit}, the number of $6$-vertex-critical $(P_3+P_1)$-free graphs is quite large, although finite. In practice, implementing a certifying algorithm for $k$-\textsc{Coloring} $(P_3+P_1)$-free graphs would require a complete list of all $(k+1)$-vertex-critical graphs. This is now possible for $k\le 5$. For $k>5$, though we do not have complete lists, we have been able to impose some structure on these graphs and we can completely describe those that are the result of graph joins in terms of $m$-vertex-critical graphs for $m<k+1$. However, when the graph has a connected complement, all we know is that it has to have independence number two and order $2(k+1)-1$.
It would be interesting to determine a structural characterization of all $k$-vertex-critical $(P_3+P_1)$-free graphs with connected complements, i.e. those with order $2k-1$.

\section{Open Problems}\label{sec:openproblems}

In this paper we characterize whether or not there is a finite number of $k$-vertex-critical $H$-free graphs for $k \geq 5$ and $H$ a graph on four vertices.
A natural extension to this research is to consider all 34 non-isomorphic graphs $H$ on five vertices.   
Of these 34, only four are cycle-free, $2K_2$-free, and claw-free, namely:  $\overline{K_5}, P_2 + 3P_1, P_3 + 2P_1$, and  $P_4+P_1$.  Thus, based on results in Section~\ref{sec:results} 
and Theorem~\ref{thm:finitelymanykcritP22P1free}, the only undetermined graphs $H$ are $P_3+2P_1$ and $P_4 + P_1$.   This leads to the following two problems.

\medskip

\noindent
{\bf Problem 1.} 
For which values of $k\geq 5$ is there a finite number of $k$-vertex-critical $P_3+2P_1$-free graphs?  \medskip

\noindent
{\bf Problem 2.}
For which values of $k\geq 5$ is there a finite number of $k$-vertex-critical $P_4+ P_1$-free graphs? \medskip

\noindent
Answering these two problems would lead to a dichotomy result for graphs on five vertices. More generally, for  graphs with $n \geq 5$ vertices,  a similar analysis shows that the only undetermined graphs
are $P_3 + (n{-}3)P_1$ and $P_4 + (n{-}4)P_1$.  This leads to the following generalizations of the above problems. \medskip

\noindent
{\bf Generalized Problem 1.} 
For which values of $k\geq 5$ and $r \geq 2$ is there a finite number of $k$-vertex-critical $P_3+rP_1$-free graphs? \medskip

\noindent
{\bf Generalized Problem 2.} 
For which values of $k\geq 5$ and $s \geq 1$ is there a finite number of $k$-vertex-critical $P_4+ sP_1$-free graphs? \medskip

\noindent
We conclude by making the following conjecture, which if proved correct would answer the above open problems.



\begin{conjecture}
Let $k\geq 5$. There is there a finite number of $k$-vertex-critical $H$-free graphs if and only if $H$ is 
an induced subgraph of $P_4+ sP_1$ for some $s \geq 0$.
\end{conjecture}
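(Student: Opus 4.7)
The plan is to split the conjecture into its two directions and attack each using the machinery already developed in the paper.

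For the ``only if'' direction, assume $H$ is not an induced subgraph of $P_4 + sP_1$ for any $s \ge 0$. If $H$ contains an induced cycle, a claw, or a $2K_2$, infinitude is immediate from Facts~\ref{fact:infifcycle}, \ref{fact:infifclaw}, and \ref{fact:infif2K2} respectively. A graph that is simultaneously cycle-free, claw-free, and $2K_2$-free is a linear forest with at most one component of order at least two, and hence equals $P_t + sP_1$ for some $t \ge 1$ and $s \ge 0$. Since the cases $t \le 4$ lie on the ``finite'' side of the conjecture, the residual work is to show, for every fixed $s \ge 0$ and every $k \ge 5$, that there are infinitely many $k$-vertex-critical $(P_t + sP_1)$-free graphs whenever $t \ge 5$. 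My plan is to exhibit such an infinite family by iterated substitution of cliques (or odd antiholes) into a small $k$-vertex-critical gadget, with vertex-criticality maintained via Lemma~\ref{lem:critjoin} and with induced $P_5$'s ruled out by a direct structural check on the substitution.

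For the ``if'' direction, observe that if $H' \subseteq_{\mathrm{ind}} H$ then every $H'$-free graph is $H$-free, so it suffices to prove finiteness in the worst case $H = P_4 + sP_1$ for each $s \ge 0$. I would argue by induction on $k$, with $k = 4$ (given by Theorem~\ref{thm:finite4critical}) as the base. For the inductive step, let $G$ be $k$-vertex-critical and $(P_4 + sP_1)$-free with $k \ge 5$. If $\alpha(G) \le s$, then $\omega(G) \le k$ combined with Ramsey's theorem bounds $|V(G)|$. Otherwise, take a maximum independent set $S$; arguing as in Theorem~\ref{thm:finitelymanykcritP22P1free}, $\chi(G - S) = k - 1$, so $G - S$ contains a $(k-1)$-vertex-critical induced subgraph $G'$ of order at most $f(k - 1, P_4 + sP_1)$. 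By Fact~\ref{fact:P4freeperfect}, either $G' = K_{k-1}$, a case that can be treated via the join structure of Lemma~\ref{lem:critjoin}, or $G'$ contains an induced $P_4$ on some vertex set $A$. In the latter case, the set $T \subseteq S$ of vertices non-adjacent to every element of $A$ satisfies $|T| \le s - 1$, else $A \cup T$ induces $P_4 + sP_1$. Hence at least $|S| - s + 1$ vertices of $S$ are adjacent to some element of $A$, and a Pigeonhole argument, iterated over all induced $P_4$'s inside the bounded graph $G'$, should produce an explicit bound on $|S|$ and therefore, via Ramsey's theorem, on $|V(G)|$.

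In my view, the harder direction is ``only if'' for $H = P_5 + sP_1$: the usual infinite families of $k$-vertex-critical graphs (Mycielski constructions, Erd\H{o}s-type high-girth graphs, odd cycles joined with cliques) all contain arbitrarily long induced paths, so genuinely new constructions with bounded induced-path length are required. On the ``if'' side, the main technical obstacle is strengthening Lemma~\ref{lem:nonneighbors} to the $P_4$ setting: knowing that some $a_i \in A$ has many neighbors in $S$ is not by itself a contradiction, and one must carefully control how the vertices of $S$ attach to different vertices of $A$, presumably by choosing the induced $P_4$ inside $G'$ adversarially or by rotating through several critical subgraphs of $G - S$.
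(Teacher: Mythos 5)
This statement is the paper's concluding \emph{conjecture}; the paper offers no proof of it, and its ``if'' direction is exactly the content of the paper's Generalized Problems 1 and 2, which are explicitly left open. So your proposal cannot be checked against an argument in the paper, and it must be judged on its own. Your ``only if'' direction is essentially correct, but you have made it look harder than it is: a graph that is cycle-free, claw-free and $2K_2$-free is a linear forest in which at most one component contains an edge \emph{and} that component is itself $2K_2$-free, hence is $P_t$ with $t\le 4$ (since $P_5$ already contains an induced $2K_2$). Thus the case $t\ge 5$ that you flag as ``the harder direction'' never arises; equivalently, if $H$ contains an induced $P_5$ then $H$ contains an induced $2K_2$, so the infinite families of $k$-vertex-critical $2K_2$-free graphs from Fact~\ref{fact:infif2K2} are already $H$-free and no new bounded-induced-path construction is needed. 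The whole ``only if'' direction follows from Facts~\ref{fact:infif2K2}, \ref{fact:infifcycle} and \ref{fact:infifclaw} alone.

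The genuine gap is in the ``if'' direction, and it sits exactly where you suspect. The engine of the paper's Theorem~\ref{thm:finitelymanykcritP22P1free} is Lemma~\ref{lem:nonneighbors}: because a \emph{single} vertex $x\notin S$ together with one neighbour in $S$ already supplies the $P_2$, every vertex of $G-S$ has at most $\ell-1$ nonneighbours in $S$, and the Pigeonhole Principle then yields one vertex of $S$ complete to the $(k-1)$-critical subgraph $G'$, contradicting criticality of $G$. For $H=P_4+sP_1$ the analogous statement fails structurally: the $P_4$ must be assembled from four vertices, so the most you get is that each induced $P_4$ with vertex set $A\subseteq V(G')$ has at most $s-1$ common nonneighbours in $S$. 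That bounds $|S|$ only if $G'$ has enough induced $P_4$'s to ``cover'' $S$, and even when it does, a vertex of $S$ adjacent to \emph{some} vertex of each $P_4$ is far from a vertex complete to $G'$, so no smaller $k$-critical subgraph is produced and no contradiction follows. Your fallback case $G'=K_{k-1}$ is also not resolved by Lemma~\ref{lem:critjoin}, since $G'$ being complete does not make $G$ a join. As written, the inductive step does not close, and since this is precisely the open problem the paper poses, the proposal should be regarded as an incomplete attempt rather than a proof.
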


\bibliographystyle{plain}
\bibliography{kcritical}

\newpage
\newgeometry{left=25mm,right=25mm,top=20mm,bottom=20mm}

\section*{Appendix:  $5$-vertex-critical $(P_3+P_1)$-free graphs}
Below are the edge sets for all 178 $5$-vertex-critical $(P_3+P_1)$-free graphs. Vertices are labelled $0$ to $n-1$ and the edges $\{i,j\}$  are denoted $ij$.

\medskip


\tiny
\setlist[itemize]{nosep}
\begin{enumerate}[nosep]
\item  $\{01, 02, 03, 04, 12, 13, 14, 23, 24, 34\}$
\item  $\{02, 03, 05, 06, 13, 14, 15, 16, 24, 25, 26, 35, 36, 45, 46, 56\}$
\item  $\{02, 04, 05, 07, 13, 15, 16, 17, 24, 26, 27, 35, 36, 37, 46, 47, 57, 67\}$
\item  $\{02, 04, 05, 06, 07, 13, 15, 16, 17, 24, 25, 27, 35, 36, 37, 46, 47, 57, 67\}$
\item  $\{02, 04, 05, 06, 07, 13, 14, 16, 17, 24, 25, 27, 35, 36, 37, 46, 47, 57, 67\}$
\item  $\{02, 04, 05, 06, 07, 13, 14, 15, 16, 17, 24, 26, 27, 35, 36, 37, 45, 47, 57, 67\}$
\item  $\{02, 03, 05, 06, 07, 13, 14, 15, 16, 17, 24, 25, 27, 35, 36, 37, 46, 47, 57, 67\}$
\item  $\{02, 03, 04, 05, 07, 13, 14, 15, 16, 17, 24, 25, 26, 27, 35, 36, 37, 46, 47, 57, 67\}$
\item  $\{02, 04, 06, 07, 13, 15, 17, 18, 24, 26, 28, 35, 37, 38, 46, 48, 57, 58, 68\}$
\item  $\{02, 04, 06, 07, 13, 15, 17, 18, 24, 26, 27, 35, 37, 38, 46, 48, 57, 58, 68\}$
\item  $\{02, 04, 06, 07, 08, 13, 15, 17, 18, 24, 26, 27, 35, 37, 38, 46, 48, 57, 58, 68\}$
\item  $\{02, 04, 06, 07, 08, 13, 15, 17, 18, 24, 26, 27, 28, 35, 37, 38, 46, 47, 57, 58, 68\}$
\item  $\{02, 04, 06, 07, 08, 13, 15, 16, 18, 24, 26, 27, 28, 35, 37, 38, 46, 47, 57, 58, 68\}$
\item  $\{02, 04, 06, 07, 13, 15, 16, 17, 18, 24, 26, 28, 35, 37, 38, 46, 48, 57, 58, 68\}$
\item  $\{02, 04, 06, 07, 13, 15, 16, 17, 18, 24, 26, 28, 35, 37, 38, 46, 48, 57, 58, 67, 68\}$
\item  $\{02, 04, 06, 07, 08, 13, 15, 16, 17, 18, 24, 26, 28, 35, 37, 38, 46, 48, 57, 58, 67\}$
\item  $\{02, 04, 06, 07, 08, 13, 15, 16, 17, 24, 26, 27, 28, 35, 37, 38, 46, 48, 57, 58, 78\}$
\item  $\{02, 04, 06, 07, 08, 13, 15, 16, 17, 18, 24, 26, 27, 35, 37, 38, 46, 48, 57, 58, 68\}$
\item  $\{02, 04, 06, 07, 08, 13, 15, 16, 17, 18, 24, 26, 27, 28, 35, 37, 38, 46, 48, 57, 58, 67\}$
\item  $\{02, 04, 06, 07, 08, 13, 15, 16, 17, 18, 24, 26, 27, 35, 36, 38, 46, 47, 57, 58, 68, 78\}$
\item  $\{02, 04, 06, 07, 08, 13, 15, 16, 17, 18, 24, 26, 27, 35, 36, 37, 38, 46, 48, 57, 58, 68\}$
\item  $\{02, 04, 06, 07, 08, 13, 15, 16, 17, 18, 24, 26, 27, 28, 35, 36, 37, 38, 46, 47, 57, 58, 68\}$
\item  $\{02, 04, 05, 06, 07, 13, 16, 17, 18, 24, 25, 26, 27, 36, 37, 38, 45, 46, 48, 57, 58, 68, 78\}$
\item  $\{02, 04, 05, 08, 13, 15, 16, 17, 18, 24, 26, 27, 28, 35, 36, 37, 38, 46, 47, 58, 67, 68\}$
\item  $\{02, 04, 05, 08, 13, 15, 16, 17, 18, 24, 26, 27, 28, 35, 36, 37, 38, 46, 47, 48, 58, 67\}$
\item  $\{02, 04, 05, 07, 13, 15, 16, 17, 18, 24, 26, 28, 35, 36, 37, 38, 46, 48, 57, 58, 67, 68\}$
\item  $\{02, 04, 05, 07, 08, 13, 15, 16, 17, 24, 26, 28, 35, 36, 37, 46, 48, 57, 58, 67, 68, 78\}$
\item  $\{02, 04, 05, 07, 08, 13, 15, 16, 17, 18, 24, 26, 28, 35, 36, 37, 46, 48, 57, 58, 67, 78\}$
\item  $\{02, 04, 05, 07, 08, 13, 15, 16, 17, 18, 24, 26, 28, 35, 36, 37, 38, 46, 48, 57, 58, 67\}$
\item  $\{02, 04, 05, 07, 13, 15, 16, 17, 18, 24, 26, 27, 28, 35, 36, 38, 46, 47, 58, 67, 68, 78\}$
\item  $\{02, 04, 05, 07, 13, 15, 16, 17, 18, 24, 26, 27, 28, 35, 36, 38, 46, 47, 48, 58, 67, 68\}$
\item  $\{02, 04, 05, 07, 08, 13, 15, 16, 17, 24, 26, 27, 28, 35, 36, 38, 46, 47, 48, 58, 67, 68\}$
\item  $\{02, 04, 05, 07, 08, 13, 15, 16, 17, 18, 24, 26, 27, 35, 36, 38, 46, 47, 58, 67, 68, 78\}$
\item  $\{02, 04, 05, 07, 08, 13, 15, 16, 17, 18, 24, 26, 27, 28, 35, 36, 38, 46, 47, 58, 67, 78\}$
\item  $\{02, 04, 05, 07, 08, 13, 15, 16, 17, 18, 24, 26, 27, 28, 35, 36, 38, 46, 47, 48, 58, 67\}$
\item  $\{02, 04, 05, 07, 13, 15, 16, 17, 18, 24, 26, 27, 28, 35, 36, 37, 38, 46, 48, 57, 68\}$
\item  $\{02, 04, 05, 07, 13, 15, 16, 17, 18, 24, 26, 27, 28, 35, 36, 37, 38, 46, 48, 57, 58, 68\}$
\item  $\{02, 04, 05, 07, 08, 13, 15, 16, 17, 18, 24, 26, 27, 35, 36, 37, 38, 46, 48, 57, 58, 68\}$
\item  $\{02, 04, 05, 07, 08, 13, 15, 16, 17, 18, 24, 26, 27, 28, 35, 36, 37, 46, 48, 57, 68\}$
\item  $\{02, 04, 05, 07, 08, 13, 15, 16, 17, 18, 24, 26, 27, 28, 35, 36, 37, 46, 48, 57, 58, 78\}$
\item  $\{02, 04, 05, 07, 08, 13, 15, 16, 17, 18, 24, 26, 27, 28, 35, 36, 37, 46, 48, 57, 58, 68\}$
\item  $\{02, 04, 05, 07, 08, 13, 15, 16, 17, 18, 24, 26, 27, 28, 35, 36, 37, 38, 46, 48, 57, 68\}$
\item  $\{02, 04, 05, 07, 08, 13, 15, 16, 17, 18, 24, 26, 27, 28, 35, 36, 37, 38, 46, 48, 57, 58, 67\}$
\item  $\{02, 04, 05, 07, 08, 13, 15, 16, 17, 18, 24, 26, 27, 28, 35, 36, 37, 38, 46, 47, 48, 58, 67\}$
\item  $\{02, 04, 05, 06, 08, 13, 15, 16, 17, 24, 27, 28, 35, 36, 37, 47, 48, 56, 58, 68\}$
\item  $\{02, 04, 05, 06, 08, 13, 15, 16, 17, 18, 24, 27, 28, 35, 36, 37, 47, 48, 56, 78\}$
\item  $\{02, 04, 05, 06, 08, 13, 15, 16, 17, 18, 24, 27, 28, 35, 36, 37, 47, 48, 56, 58, 78\}$
\item  $\{02, 04, 05, 06, 08, 13, 15, 16, 17, 18, 24, 27, 28, 35, 36, 37, 47, 48, 56, 58, 68\}$
\item  $\{02, 04, 05, 06, 08, 13, 15, 16, 17, 18, 24, 27, 28, 35, 36, 37, 38, 47, 48, 56, 78\}$
\item  $\{02, 04, 05, 06, 07, 08, 13, 15, 16, 17, 18, 24, 27, 28, 35, 36, 37, 47, 48, 56, 58, 68\}$
\item  $\{02, 04, 05, 06, 08, 13, 15, 16, 17, 18, 24, 26, 27, 35, 37, 38, 46, 48, 57, 58, 68\}$
\item  $\{02, 04, 05, 06, 08, 13, 15, 16, 17, 18, 24, 26, 27, 35, 37, 38, 46, 47, 57, 58, 68\}$
\item  $\{02, 04, 05, 06, 08, 13, 15, 16, 17, 18, 24, 26, 27, 28, 35, 37, 38, 46, 47, 57, 58, 68\}$
\item  $\{02, 04, 05, 06, 07, 08, 13, 15, 16, 17, 18, 24, 26, 28, 35, 37, 38, 46, 48, 57, 58, 67\}$
\item  $\{02, 04, 05, 06, 07, 13, 15, 16, 17, 18, 24, 26, 27, 28, 35, 37, 38, 46, 48, 57, 58, 68\}$
\item  $\{02, 04, 05, 06, 07, 08, 13, 15, 16, 17, 18, 24, 26, 27, 35, 37, 38, 46, 48, 57, 58, 68\}$
\item  $\{02, 04, 05, 06, 07, 13, 15, 16, 17, 18, 24, 26, 27, 28, 35, 37, 38, 46, 47, 48, 57, 58, 68\}$
\item  $\{02, 04, 05, 06, 13, 15, 16, 17, 18, 24, 26, 27, 28, 35, 36, 37, 38, 47, 48, 56, 57, 68, 78\}$
\item  $\{02, 04, 05, 06, 08, 13, 15, 16, 17, 24, 26, 27, 28, 35, 36, 37, 47, 48, 56, 57, 58, 68, 78\}$
\item  $\{02, 04, 05, 06, 08, 13, 15, 16, 17, 18, 24, 26, 27, 28, 35, 36, 37, 47, 48, 56, 57, 68, 78\}$
\item  $\{02, 04, 05, 06, 08, 13, 15, 16, 17, 18, 24, 26, 27, 28, 35, 36, 37, 47, 48, 56, 57, 58, 78\}$
\item  $\{02, 04, 05, 06, 08, 13, 15, 16, 17, 18, 24, 26, 27, 28, 35, 36, 37, 47, 48, 56, 57, 58, 68\}$
\item  $\{02, 04, 05, 06, 08, 13, 15, 16, 17, 18, 24, 26, 27, 28, 35, 36, 37, 38, 47, 48, 56, 57, 68\}$
\item  $\{02, 04, 05, 06, 07, 13, 15, 16, 17, 18, 24, 26, 28, 35, 36, 37, 38, 47, 48, 56, 57, 68, 78\}$
\item  $\{02, 04, 05, 06, 07, 08, 13, 15, 16, 17, 18, 24, 26, 28, 35, 36, 37, 47, 48, 56, 57, 68, 78\}$
\item  $\{02, 04, 05, 06, 07, 13, 15, 16, 17, 18, 24, 26, 27, 28, 35, 36, 38, 47, 48, 56, 57, 68, 78\}$
\item  $\{02, 04, 05, 06, 07, 08, 13, 15, 16, 17, 24, 26, 27, 28, 35, 36, 38, 47, 48, 56, 57, 58, 68\}$
\item  $\{02, 04, 05, 06, 13, 15, 16, 17, 18, 24, 26, 27, 28, 35, 36, 37, 38, 46, 47, 48, 57, 68, 78\}$
\item  $\{02, 04, 05, 06, 07, 13, 15, 16, 17, 18, 24, 26, 27, 28, 35, 36, 37, 38, 46, 48, 57, 68, 78\}$
\item  $\{02, 04, 05, 06, 07, 13, 15, 16, 17, 18, 24, 26, 27, 28, 35, 36, 37, 38, 46, 48, 57, 58, 68\}$
\item  $\{02, 04, 05, 06, 07, 08, 13, 15, 16, 17, 18, 24, 26, 27, 35, 36, 37, 38, 46, 48, 57, 58, 68\}$
\item  $\{02, 04, 05, 06, 07, 13, 15, 16, 17, 18, 24, 26, 27, 28, 35, 36, 37, 38, 46, 47, 48, 57, 58, 68\}$
\item  $\{02, 04, 05, 06, 07, 13, 15, 16, 17, 18, 24, 25, 26, 28, 36, 37, 38, 45, 47, 48, 57, 68, 78\}$
\item  $\{02, 04, 05, 06, 07, 08, 13, 15, 16, 17, 18, 24, 25, 26, 28, 36, 37, 38, 45, 47, 57, 68, 78\}$
\item  $\{02, 04, 05, 06, 07, 08, 13, 15, 16, 17, 18, 24, 25, 26, 28, 36, 37, 38, 45, 47, 57, 58, 68\}$
\item  $\{02, 04, 05, 06, 08, 13, 15, 16, 17, 24, 25, 27, 28, 35, 36, 37, 46, 47, 48, 57, 58, 68, 78\}$
\item  $\{02, 04, 05, 06, 08, 13, 15, 16, 17, 18, 24, 25, 27, 35, 36, 37, 38, 46, 47, 48, 57, 68\}$
\item  $\{02, 04, 05, 06, 08, 13, 15, 16, 17, 18, 24, 25, 27, 35, 36, 37, 38, 46, 47, 48, 57, 58, 68\}$
\item  $\{02, 04, 05, 06, 08, 13, 15, 16, 17, 18, 24, 25, 27, 28, 35, 36, 37, 46, 47, 48, 57, 58, 78\}$
\item  $\{02, 04, 05, 06, 08, 13, 15, 16, 17, 18, 24, 25, 27, 28, 35, 36, 37, 46, 47, 48, 57, 58, 68\}$
\item  $\{02, 04, 05, 06, 08, 13, 15, 16, 17, 18, 24, 25, 27, 28, 35, 36, 37, 38, 46, 47, 57, 58, 68\}$
\item  $\{02, 04, 05, 06, 08, 13, 15, 16, 17, 18, 24, 25, 27, 28, 35, 36, 37, 38, 46, 47, 48, 57, 68\}$
\item  $\{02, 04, 05, 06, 08, 13, 15, 16, 17, 18, 24, 25, 27, 28, 35, 36, 37, 46, 47, 48, 57, 58, 67, 68\}$
\item  $\{02, 04, 05, 06, 08, 13, 15, 16, 17, 18, 24, 25, 27, 28, 35, 36, 37, 38, 46, 47, 57, 58, 67, 68\}$
\item  $\{02, 04, 05, 06, 07, 08, 13, 15, 16, 17, 18, 24, 25, 27, 35, 36, 37, 38, 46, 48, 57, 58, 68\}$
\item  $\{02, 04, 05, 06, 07, 08, 13, 15, 16, 17, 18, 24, 25, 27, 28, 35, 36, 37, 38, 46, 48, 57, 68\}$
\item  $\{02, 04, 05, 06, 07, 13, 15, 16, 17, 18, 24, 25, 27, 28, 35, 36, 37, 38, 46, 47, 48, 56, 58, 78\}$
\item  $\{02, 04, 05, 06, 07, 08, 13, 15, 16, 17, 18, 24, 25, 27, 28, 35, 36, 37, 38, 46, 47, 56, 58, 78\}$
\item  $\{02, 04, 05, 06, 07, 13, 15, 16, 17, 18, 24, 25, 26, 27, 35, 37, 38, 46, 47, 48, 56, 58, 68, 78\}$
\item  $\{02, 04, 05, 06, 07, 08, 13, 15, 16, 17, 18, 24, 25, 26, 27, 35, 37, 38, 46, 47, 56, 58, 68, 78\}$
\item  $\{02, 04, 05, 06, 07, 08, 13, 15, 16, 17, 18, 24, 25, 26, 27, 35, 37, 38, 46, 47, 48, 56, 68, 78\}$
\item  $\{02, 04, 05, 06, 07, 08, 13, 15, 16, 17, 18, 24, 25, 26, 27, 35, 37, 38, 46, 47, 48, 56, 58, 68\}$
\item  $\{02, 04, 05, 06, 07, 08, 13, 15, 16, 17, 18, 24, 25, 26, 27, 35, 36, 37, 38, 47, 48, 56, 58, 78\}$
\item  $\{02, 04, 05, 06, 08, 13, 14, 16, 17, 18, 24, 25, 27, 35, 36, 37, 38, 46, 48, 57, 58, 68\}$
\item  $\{02, 04, 05, 06, 08, 13, 14, 16, 17, 18, 24, 25, 27, 28, 35, 36, 37, 46, 48, 57, 68\}$
\item  $\{02, 04, 05, 06, 08, 13, 14, 16, 17, 18, 24, 25, 27, 28, 35, 36, 37, 38, 46, 48, 57, 68\}$
\item  $\{02, 04, 05, 06, 08, 13, 14, 16, 17, 24, 25, 27, 28, 35, 36, 37, 38, 46, 47, 48, 57, 58, 78\}$
\item  $\{02, 04, 05, 06, 08, 13, 14, 16, 17, 18, 24, 25, 27, 35, 36, 37, 38, 46, 47, 57, 58, 68\}$
\item  $\{02, 04, 05, 06, 08, 13, 14, 16, 17, 18, 24, 25, 27, 35, 36, 37, 38, 46, 47, 48, 57, 58, 68\}$
\item  $\{02, 04, 05, 06, 08, 13, 14, 16, 17, 18, 24, 25, 27, 28, 35, 36, 37, 46, 47, 48, 57, 58, 78\}$
\item  $\{02, 04, 05, 06, 08, 13, 14, 16, 17, 18, 24, 25, 27, 28, 35, 36, 37, 38, 46, 47, 57, 58, 68\}$
\item  $\{02, 04, 05, 06, 08, 13, 14, 16, 17, 18, 24, 25, 27, 28, 35, 36, 37, 38, 46, 47, 48, 57, 68\}$
\item  $\{02, 04, 05, 06, 07, 08, 13, 14, 16, 17, 18, 24, 25, 28, 35, 36, 37, 46, 48, 57, 67, 68, 78\}$
\item  $\{02, 04, 05, 06, 07, 08, 13, 14, 16, 17, 18, 24, 25, 28, 35, 36, 37, 38, 46, 48, 57, 58, 67\}$
\item  $\{02, 04, 05, 06, 07, 13, 14, 16, 17, 18, 24, 25, 27, 28, 35, 36, 37, 38, 46, 48, 57, 58, 68\}$
\item  $\{02, 04, 05, 06, 07, 08, 13, 14, 16, 17, 18, 24, 25, 27, 35, 36, 37, 38, 46, 48, 57, 68, 78\}$
\item  $\{02, 04, 05, 06, 07, 08, 13, 14, 16, 17, 18, 24, 25, 27, 35, 36, 37, 38, 46, 48, 57, 58, 68\}$
\item  $\{02, 04, 05, 06, 07, 08, 13, 14, 16, 17, 18, 24, 25, 27, 28, 35, 36, 37, 46, 48, 57, 68, 78\}$
\item  $\{02, 04, 05, 06, 07, 08, 13, 14, 16, 17, 18, 24, 25, 27, 28, 35, 36, 37, 46, 48, 57, 58, 68\}$
\item  $\{02, 04, 05, 06, 07, 08, 13, 14, 16, 17, 18, 24, 25, 27, 28, 35, 36, 37, 38, 46, 48, 57, 68\}$
\item  $\{02, 04, 05, 06, 08, 13, 14, 16, 17, 24, 25, 27, 28, 35, 36, 37, 46, 47, 48, 56, 57, 58, 68, 78\}$
\item  $\{02, 04, 05, 06, 08, 13, 14, 16, 17, 18, 24, 25, 27, 28, 35, 36, 37, 38, 46, 47, 48, 56, 57, 68\}$
\item  $\{02, 04, 05, 06, 07, 13, 14, 16, 17, 18, 24, 25, 27, 28, 35, 36, 37, 38, 46, 47, 48, 56, 57, 58, 68\}$
\item  $\{02, 04, 05, 06, 07, 13, 14, 16, 18, 24, 25, 26, 27, 35, 37, 38, 46, 47, 48, 56, 57, 58, 68, 78\}$
\item  $\{02, 04, 05, 06, 07, 13, 14, 16, 17, 18, 24, 25, 26, 35, 37, 38, 46, 47, 48, 56, 57, 58, 68, 78\}$
\item  $\{02, 04, 05, 06, 07, 13, 14, 16, 17, 18, 24, 25, 26, 28, 35, 37, 38, 46, 47, 48, 56, 57, 58, 78\}$
\item  $\{02, 04, 05, 06, 07, 13, 14, 16, 17, 18, 24, 25, 26, 28, 35, 37, 38, 46, 47, 48, 56, 57, 58, 68\}$
\item  $\{02, 04, 05, 06, 07, 08, 13, 14, 16, 17, 18, 24, 25, 26, 35, 37, 38, 46, 47, 56, 57, 58, 68, 78\}$
\item  $\{02, 04, 05, 06, 07, 08, 13, 14, 16, 17, 18, 24, 25, 26, 27, 35, 37, 38, 46, 56, 57, 58, 68, 78\}$
\item  $\{02, 04, 05, 06, 07, 08, 13, 14, 16, 17, 18, 24, 25, 26, 27, 35, 37, 38, 46, 48, 56, 57, 58, 78\}$
\item  $\{02, 04, 05, 06, 07, 13, 14, 16, 17, 18, 24, 25, 26, 28, 35, 36, 37, 38, 46, 47, 57, 58, 68, 78\}$
\item  $\{02, 04, 05, 06, 07, 13, 14, 16, 17, 18, 24, 25, 26, 28, 35, 36, 37, 38, 46, 47, 48, 57, 58, 78\}$
\item  $\{02, 04, 05, 06, 07, 08, 13, 14, 16, 17, 24, 25, 26, 28, 35, 36, 37, 38, 46, 47, 57, 58, 68, 78\}$
\item  $\{02, 04, 05, 06, 07, 08, 13, 14, 16, 17, 24, 25, 26, 28, 35, 36, 37, 38, 46, 47, 48, 57, 58, 78\}$
\item  $\{02, 04, 05, 06, 07, 08, 13, 14, 16, 17, 18, 24, 25, 26, 35, 36, 37, 38, 46, 47, 57, 58, 68, 78\}$
\item  $\{02, 04, 05, 06, 07, 08, 13, 14, 16, 17, 18, 24, 25, 26, 35, 36, 37, 38, 46, 47, 48, 57, 68, 78\}$
\item  $\{02, 04, 05, 06, 07, 08, 13, 14, 16, 17, 18, 24, 25, 26, 28, 35, 36, 37, 38, 46, 47, 57, 58, 68\}$
\item  $\{02, 04, 05, 06, 07, 08, 13, 14, 16, 17, 18, 24, 25, 26, 27, 35, 36, 38, 46, 47, 48, 57, 68, 78\}$
\item  $\{02, 04, 05, 06, 07, 13, 14, 16, 17, 18, 24, 25, 26, 27, 35, 36, 37, 38, 46, 48, 57, 58, 68, 78\}$
\item  $\{02, 04, 05, 06, 07, 08, 13, 14, 16, 17, 18, 24, 25, 26, 27, 35, 36, 37, 38, 46, 48, 57, 68, 78\}$
\item  $\{02, 04, 05, 06, 07, 08, 13, 14, 16, 17, 18, 24, 25, 26, 27, 35, 36, 37, 38, 46, 48, 57, 58, 68\}$
\item  $\{02, 04, 05, 06, 07, 13, 14, 16, 17, 18, 24, 25, 26, 28, 35, 36, 37, 38, 45, 47, 48, 57, 58, 78\}$
\item  $\{02, 04, 05, 06, 07, 08, 13, 14, 16, 17, 18, 24, 25, 26, 35, 36, 37, 38, 45, 47, 48, 57, 68, 78\}$
\item  $\{02, 04, 05, 06, 07, 08, 13, 14, 16, 17, 18, 24, 25, 26, 28, 35, 36, 37, 38, 45, 47, 57, 68, 78\}$
\item  $\{02, 04, 05, 06, 07, 13, 14, 16, 17, 18, 24, 25, 26, 27, 35, 36, 37, 38, 45, 46, 48, 57, 58, 68, 78\}$
\item  $\{02, 04, 05, 06, 07, 08, 13, 14, 16, 17, 18, 24, 25, 26, 27, 35, 36, 37, 38, 45, 46, 48, 57, 68, 78\}$
\item  $\{02, 04, 05, 06, 07, 08, 13, 14, 16, 17, 18, 24, 25, 26, 27, 35, 36, 37, 38, 45, 46, 48, 57, 58, 68\}$
\item  $\{02, 04, 05, 06, 08, 13, 14, 15, 17, 18, 24, 26, 27, 35, 36, 37, 38, 46, 48, 57, 58, 68\}$
\item  $\{02, 04, 05, 06, 07, 08, 13, 14, 15, 17, 18, 24, 26, 28, 35, 36, 37, 46, 47, 48, 57, 58, 78\}$
\item  $\{02, 04, 05, 06, 07, 08, 13, 14, 15, 17, 18, 24, 26, 27, 35, 36, 38, 46, 47, 57, 58, 68, 78\}$
\item  $\{02, 04, 05, 06, 07, 08, 13, 14, 15, 17, 18, 24, 26, 27, 35, 36, 38, 46, 47, 48, 57, 58, 68\}$
\item  $\{02, 04, 05, 06, 07, 13, 14, 15, 17, 18, 24, 26, 27, 28, 35, 36, 37, 38, 46, 48, 57, 68\}$
\item  $\{02, 04, 05, 06, 07, 13, 14, 15, 17, 18, 24, 26, 27, 28, 35, 36, 37, 38, 46, 48, 57, 58, 68\}$
\item  $\{02, 04, 05, 06, 07, 08, 13, 14, 15, 17, 18, 24, 26, 27, 35, 36, 37, 38, 46, 48, 57, 58, 68\}$
\item  $\{02, 04, 05, 06, 07, 08, 13, 14, 15, 17, 18, 24, 26, 27, 28, 35, 36, 37, 46, 48, 57, 58, 78\}$
\item  $\{02, 04, 05, 06, 07, 08, 13, 14, 15, 16, 17, 18, 24, 26, 27, 35, 37, 38, 46, 47, 48, 56, 57, 58, 68\}$
\item  $\{02, 04, 05, 06, 08, 13, 14, 15, 16, 17, 24, 26, 27, 28, 35, 36, 37, 38, 46, 47, 57, 58, 68, 78\}$
\item  $\{02, 04, 05, 06, 07, 13, 14, 15, 16, 18, 24, 26, 27, 28, 35, 36, 37, 38, 46, 47, 57, 58, 68, 78\}$
\item  $\{02, 04, 05, 06, 07, 13, 14, 15, 16, 18, 24, 26, 27, 28, 35, 36, 37, 38, 46, 47, 48, 57, 68, 78\}$
\item  $\{02, 04, 05, 06, 07, 08, 13, 14, 15, 16, 18, 24, 26, 27, 35, 36, 37, 38, 46, 47, 57, 58, 68, 78\}$
\item  $\{02, 04, 05, 06, 07, 08, 13, 14, 15, 16, 18, 24, 26, 27, 28, 35, 36, 37, 38, 46, 47, 57, 58, 68\}$
\item  $\{02, 04, 05, 06, 07, 08, 13, 14, 15, 16, 17, 18, 24, 26, 28, 35, 36, 37, 38, 46, 47, 57, 58, 68\}$
\item  $\{02, 04, 05, 06, 07, 13, 14, 15, 16, 17, 18, 24, 26, 27, 28, 35, 36, 37, 38, 46, 48, 57, 58, 68\}$
\item  $\{02, 04, 05, 06, 07, 08, 13, 14, 15, 16, 17, 18, 24, 26, 27, 35, 36, 37, 38, 46, 48, 57, 58, 68\}$
\item  $\{02, 04, 05, 06, 07, 08, 13, 14, 15, 16, 17, 18, 24, 26, 27, 28, 35, 36, 37, 38, 46, 47, 57, 58, 68\}$
\item  $\{02, 04, 05, 06, 07, 08, 13, 14, 15, 16, 17, 18, 24, 26, 27, 35, 36, 37, 38, 45, 47, 48, 58, 67, 68\}$
\item  $\{02, 04, 05, 06, 07, 08, 13, 14, 15, 16, 17, 18, 24, 26, 27, 28, 35, 36, 37, 38, 45, 46, 57, 68, 78\}$
\item  $\{02, 04, 05, 06, 07, 08, 13, 14, 15, 16, 17, 18, 24, 25, 26, 27, 35, 36, 37, 38, 47, 48, 56, 58, 78\}$
\item  $\{02, 03, 05, 06, 08, 13, 14, 15, 16, 17, 18, 24, 25, 27, 28, 35, 36, 37, 46, 47, 48, 57, 58, 68\}$
\item  $\{02, 03, 05, 06, 08, 13, 14, 15, 16, 17, 18, 24, 25, 27, 28, 35, 36, 37, 38, 46, 47, 57, 58, 68\}$
\item  $\{02, 03, 05, 06, 07, 08, 13, 14, 15, 16, 17, 18, 24, 25, 27, 28, 35, 36, 37, 46, 48, 57, 58, 67, 68\}$
\item  $\{02, 03, 05, 06, 07, 13, 14, 15, 16, 17, 18, 24, 25, 27, 28, 35, 36, 37, 38, 46, 47, 48, 56, 58, 78\}$
\item  $\{02, 03, 05, 06, 07, 08, 13, 14, 15, 16, 17, 24, 25, 27, 28, 35, 36, 37, 46, 47, 48, 56, 58, 68, 78\}$
\item  $\{02, 03, 05, 06, 07, 08, 13, 14, 15, 16, 17, 18, 24, 25, 27, 28, 35, 36, 37, 46, 47, 48, 56, 58, 78\}$
\item  $\{02, 03, 05, 06, 07, 08, 13, 14, 15, 16, 17, 18, 24, 25, 27, 28, 35, 36, 37, 46, 47, 48, 56, 58, 68\}$
\item  $\{02, 03, 05, 06, 07, 08, 13, 14, 15, 16, 17, 18, 24, 25, 27, 28, 35, 36, 37, 38, 46, 47, 56, 58, 78\}$
\item  $\{02, 03, 05, 06, 07, 13, 14, 15, 16, 17, 18, 24, 25, 26, 27, 28, 35, 37, 38, 46, 47, 48, 56, 58, 78\}$
\item  $\{02, 03, 05, 06, 07, 08, 13, 14, 15, 16, 17, 18, 24, 25, 26, 27, 35, 37, 38, 46, 47, 48, 56, 58, 78\}$
\item  $\{02, 03, 05, 06, 07, 08, 13, 14, 15, 16, 17, 18, 24, 25, 26, 27, 28, 35, 37, 38, 46, 47, 56, 58, 78\}$
\item  $\{02, 03, 05, 06, 07, 08, 13, 14, 15, 16, 17, 18, 24, 25, 26, 27, 35, 36, 37, 38, 46, 48, 57, 58, 68\}$
\item  $\{02, 03, 05, 06, 07, 08, 13, 14, 15, 16, 17, 18, 24, 25, 26, 27, 28, 35, 36, 37, 46, 48, 57, 58, 68\}$
\item  $\{02, 03, 05, 06, 07, 08, 13, 14, 15, 16, 17, 18, 24, 25, 26, 27, 28, 35, 36, 37, 38, 46, 47, 57, 58, 68\}$
\item  $\{02, 03, 04, 06, 07, 08, 13, 14, 15, 16, 17, 18, 24, 25, 26, 35, 36, 37, 38, 46, 47, 48, 57, 68, 78\}$
\item  $\{02, 03, 04, 06, 07, 08, 13, 14, 15, 16, 17, 18, 24, 25, 26, 27, 35, 36, 37, 38, 46, 47, 48, 57, 58, 68\}$
\item  $\{02, 03, 04, 06, 07, 08, 13, 14, 15, 16, 17, 18, 24, 25, 26, 27, 28, 35, 36, 37, 38, 46, 47, 57, 58, 68\}$
\item  $\{02, 03, 04, 05, 07, 08, 13, 14, 15, 16, 17, 18, 24, 25, 26, 27, 28, 35, 36, 37, 38, 46, 47, 48, 57, 68\}$
\item  $\{02, 03, 04, 05, 06, 08, 13, 14, 15, 16, 17, 18, 24, 25, 26, 27, 28, 35, 36, 37, 38, 46, 47, 57, 58, 68\}$
\item  $\{02, 03, 04, 05, 06, 07, 13, 14, 15, 16, 17, 18, 24, 25, 26, 27, 28, 35, 36, 37, 38, 46, 47, 48, 57, 58, 68\}$
\end{enumerate}

\end{document}